\documentclass[12pt,oneside,leqno]{amsart}
\usepackage[all]{xy}
\usepackage{amsfonts,amsmath,oldgerm,amssymb,amscd}
\usepackage{enumerate}
\UseComputerModernTips
\numberwithin{equation}{section}
\usepackage[breaklinks]{hyperref}

\usepackage{float}

\makeatletter
\@namedef{subjclassname@2010}{%
	\textup{2010} Mathematics Subject Classification}
\makeatother

\usepackage{caption} 
\captionsetup{belowskip=12pt,aboveskip=4pt}

\setlength{\oddsidemargin}{0.40in}
\setlength{\evensidemargin}{0.0in}
\setlength{\textwidth}{6in}
\setlength{\textheight}{8.4in}
\setlength{\parskip}{0.15in}

\newtheorem{theorem}{Theorem}
\newtheorem{lemma}{Lemma}[section]
\newtheorem{prop}{Proposition}[section]
\newtheorem{coro}{Corollary}[section]

\theoremstyle{definition}
\newtheorem{defn}[theorem]{Definition} 
\newtheorem{conj}{Conjecture} 

\theoremstyle{remark}

\newcommand{\Z}{\mbox{$\mathbb Z$}}
\newcommand{\Q}{\mbox{$\mathbb Q$}}
\newcommand{\N}{\mbox{$\mathbb N$}}     

\title{Stability of Certain Higher Degree Polynomials}
\author{Shanta Laishram, Ritumoni Sarma} 
\address{Stat Math Unit, Indian Statistical Institute, New Delhi 110016, India}
\email{shanta@isid.ac.in}

\address{
	Department of Mathematics, Indian Institute of Technology Delhi, New Delhi-110016, India}
\email{ritumoni@gmail.com }
\author{Himanshu Sharma}
\address{ Department of Mathematics, Indian Institute of Technology Delhi, New Delhi-110016, India}\email{himanshusharma985@gmail.com}

\usepackage{fancyhdr}
\pagestyle{fancy}
\lhead{}
\rhead{Stability of Certain Higher Degree Polynomials}
\begin{document}
	
	\date{\today}
	\keywords{Stable, Eventually stable, non-Archimedean valuation}
	\maketitle
	
	\begin{abstract}
	One of the interesting problems in arithmetic dynamics is to study the stability of polynomials over a field. In this paper, we study the stability of $f(z)=z^d+\frac{1}{c}$ for $d\geq 2$, $c\in{\mathbb{Z}\setminus\{0\}}$. 
	We show that for infinite families of $d\geq 3$, whenever $f(z)$ is irreducible, all its iterates are irreducible, that is, $f(z)$ is stable.  For $c\equiv 1\pmod{4}$, we show that all the 
	 iterates of $z^2+\frac{1}{c}$ are irreducible. Also we show that for $d=3$, if $f(z)$ is reducible, then the number of irreducible factors of each iterate of $f(z)$ is exactly $2$ for $|c|\leq{10^{12}}$. 
	 	\end{abstract}

	\section{Introduction}
	An important question in the field of arithmetic dynamics is to study the recurrence sequences satisfying $t_{n}=f(t_{n-1})$, where $t_{0}\in{\mathbb{Q}}$ and $f(z)\in{\mathbb{Q}[z]}$ with $\deg(f(z))\geq{2}$. One can ask how many primes are there in the sequence $(t_{n})$ or which primes are dividing at least one element of the sequence $(t_{n}).$ Many authors have investigated these questions in [\cite{rice2007},\cite{krieger2013},\cite{ingram2009},\cite{gratton2013},\cite{faber2011}]. One interesting problem in this direction is about the stability and eventual stability of polynomials over a field. 
 In fact stability and eventual stability have been recently used in proving finite index results for some arboreal representations in \cite{2021finite} and \cite{2019finite}. 
 
 If each iterate of $f(z)\in{\mathbb{Q}[z]}$ is irreducible over $\mathbb{Q}$ then we say that $f(z)$ is {\it stable} over $\mathbb{Q}$. More generally if the number of irreducible factors of iterates of $f(z)$ is bounded by a constant, that is, there exist $n_0\in{\mathbb{N}}$ such that the number of irreducible factors of $f^n(z)$ remains constant for $n\geq{n_0}$ then we say that $f(z)$ is 
  {\it eventually stable}. 
   
   We consider the stability and  eventual stability of  $z^d+b$ with $b\in \Q, b\neq 0$.  Put $b=\frac{a}{c}$ with $a, c\in \Z, c\neq 0$. It was shown in \cite[Theorem $1.6$]{hamblen} that $f(z)=z^d+\frac{a}{c}$ is eventually stable when $a\ne{1}$. The stability and eventual stability of $z^d+\frac{1}{c}$ is not  known completely. 
   Even for the quadratic polynomial $z^2+\frac{1}{c}$ over $\mathbb{Q}$, it is not completely known though some partial 
   results are available in \cite{jones2020}. We refer to \cite{benedetto} for a more detailed survey.  
   In this paper, we consider the stability and eventual stability of the polynomial $z^d+\frac{1}{c}, c\in \mathbb{Z}\setminus\{0\}$. We prove the following results.  
     
	\begin{theorem}\label{main1}
	Let $d\geq 2$ be a positive integer and $f(z)=z^d +\frac{1}{c}$  where $c\ne{0}$ is an integer. We have 
	\begin{enumerate}[(a)]
	\item  Let $d=2$. If $c\equiv 1\pmod{4}$ then each iterate of $f(z)$ is irreducible over $\mathbb{Q}$. 
	\item   Let $d>2$. Assume that $f(z)$ is irreducible. Then $f(z)$ is stable over $\mathbb{Q}$ when 
		\begin{enumerate}[$1.$]
		    \item   $d\geq3$ is odd.
		    \item   $d=2^r$ with $r\geq 2.$
		    \item  $d=2^r\cdot 3^s$ with $r, s\geq{1}.$
		     \item  $d=2^r\cdot 5^s\cdot 7^t$ with $r\geq 1, s, t\geq 0$ and $d\equiv 1\pmod{3}$
		    \item  $d\equiv 4\pmod{12}$.
		    \end{enumerate}
		    \end{enumerate}
	\end{theorem}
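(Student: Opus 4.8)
The plan is to reduce the stability question to a classical criterion: an iterate $f^n$ of a monic irreducible $f(z)=z^d+\frac1c$ (cleared of denominators so that one works with the integral model $c^{d^{n}-\cdots}f^n(z)$, or equivalently works over $\Q_p$ for suitable primes $p$) is irreducible precisely when the "new" critical-orbit value — here the quantity $f^n(0)=f(f^{n-1}(0))$, i.e.\ the $n$-th iterate of the critical point $0$ — is not a $p$-th power obstruction for any prime $p\mid d$, together with an irreducibility-of-the-first-iterate hypothesis which is assumed. More precisely, I would invoke the standard lemma (going back to work around stable polynomials, e.g.\ the Jones/Hamblen-type criterion already implicit in \cite{hamblen}): if $f$ is irreducible and, for every $n$, the element $(-1)^{d^{n}}f^{n}(0)$ fails to be a $q$-th power in $\Q$ for each prime $q\mid d$ (and, when $4\mid d$, also not of the form $-4(\text{4th power})$), then $f$ is stable. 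So the whole theorem becomes: control the factorization of the adjusted orbit values $f^n(0)$.

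Next I would set up a non-Archimedean valuation argument, which is why "non-Archimedean valuation" is a keyword. Write $f^n(0)=\frac{A_n}{c^{e_n}}$ in lowest terms; one shows by induction that $A_n\equiv 1\pmod{?}$ and, crucially, tracks $v_p(A_n)$ for the relevant small primes $p$ (the primes dividing $d$: namely $2,3,5,7$ across the listed cases). The recursion $f^{n}(0)=\big(f^{n-1}(0)\big)^{d}+\frac1c$ gives $A_n = A_{n-1}^{d} + c^{\,d e_{n-1}-1}\cdot(\text{unit adjustment})$ up to bookkeeping of the exponent $e_n=d\,e_{n-1}+1$ or so. For a prime $q\mid d$ dividing $c$: then $v_q(f^n(0))=v_q(1/c)=-v_q(c)$ for all $n\geq1$ by the ultrametric inequality, since $v_q$ of the $d$-th power term is a multiple of $d\geq q$, hence strictly larger in absolute value argument in the right direction; a negative valuation not divisible by $q$ immediately obstructs being a $q$-th power — handled. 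For a prime $q\mid d$ with $q\nmid c$: here the $d$-th power term is a $q$-adic unit or small, and one must show $v_q(A_n)$ is never $\equiv 0\pmod q$ with the value being a $q$-th power residue; this is where the congruence conditions on $d$ enter (e.g.\ $d\equiv 1\pmod 3$, or $d\equiv 4\pmod{12}$), forcing the orbit to stay in a coset that is never a $q$-th power. The odd-$d$ case (b)(1): any prime $q\mid d$ is odd; if $q\mid c$ we are done as above, and if $q\nmid c$ one uses that $f^n(0)\equiv 1\pmod{q}$-type control plus a direct 2-adic or $q$-adic descent to rule out $q$-th powers. The pure $2$-power case $d=2^r$, $r\geq2$: work $2$-adically; show $f^n(0)$ has the form (unit)$\cdot 2^{\text{something}}$ with the something never $\equiv 0 \pmod 4$ when $4\mid d$, or handle the $-4y^4$ exceptional form by a parity/size argument on $v_2(c)$.

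For the quadratic case (a), $d=2$, $c\equiv1\pmod 4$: here $f(z)=z^2+\frac1c$ is irreducible iff $-\frac1c$ is not a square, i.e.\ $-c$ is not a square; and $f$ stable iff $-f^n(0)$ is never a square for all $n$. With $c\equiv1\pmod 4$ write things over $\Q_2$: I would show inductively that $-f^n(0) = -\big(f^{n-1}(0)^2+\frac1c\big)$ is a $2$-adic nonsquare by tracking its residue modulo $8$ (or by a Hensel-type obstruction), using $c\equiv1\pmod 4$ to pin down $1/c\equiv 1\pmod 4$ in $\Z_2$, so that $f^n(0)\equiv \square + 1/c$ lands in a class forcing $-f^n(0)\equiv 5\pmod 8$ or similar — a non-square class in $\Z_2^\times$. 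This is essentially Jones's method for $z^2+a$ specialized to $a=1/c$.

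The main obstacle I anticipate is the uniform treatment of the "$q\nmid c$" sub-case for the composite degrees in (b)(3)–(b)(5): here one cannot just read off a negative valuation, and one must simultaneously control the orbit modulo each prime $q\mid d$ and verify that the Capelli-type condition (is $f^n(0)$ a $q$-th power, or for $4\mid d$ of the special $-4(\cdot)^4$ shape) genuinely fails — this is exactly where the arithmetic conditions $d\equiv1\pmod3$, $d\equiv4\pmod{12}$, etc., were reverse-engineered, and getting the induction hypothesis strong enough (probably "$f^n(0)$ lies in a fixed non-trivial coset of $(\Q_q^\times)^q$ for all $n$") to close under the recursion $x\mapsto x^d+\frac1c$ will require care with cross-terms. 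The reducible $d=3$ statement in the abstract (factor count exactly $2$ for $|c|\leq 10^{12}$) is presumably a separate later result and I would not address it here.
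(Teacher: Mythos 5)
Your opening reduction is essentially the paper's Lemma \ref{7}: by Capelli and a norm computation, stability follows once $f^{n}(0)$ is not a $p$-th power in $\Q$ for each prime $p\mid d$ (with the $-4(\cdot)^4$ caveat when $4\mid d$). But the engine you propose for verifying that condition --- tracking $q$-adic valuations and residue cosets of the orbit --- is not the paper's method and, more importantly, does not work. First, for a prime $q\mid \gcd(d,c)$ your claim that $\nu_q(f^n(0))=-\nu_q(c)$ is false: the recursion gives $\nu_q(f^n(0))=-d^{\,n-1}\nu_q(c)$ for $n\geq 2$, which is a multiple of $q$, so the negative valuation provides no obstruction (indeed the paper notes the denominator of $f^n(0)$ is always a perfect $p$-th power, so everything hinges on the numerator $a_n$, which is coprime to $c$). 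Second, for $q\nmid c$ there is no mechanism forcing $a_n$ to avoid the $q$-th power classes locally, and you have misread the role of the hypotheses such as $d\equiv 1\pmod 3$ or $d\equiv 4\pmod{12}$: they are not coset conditions. The paper's actual idea is that if $a_n=a_{n-1}^{d}+c^{\,d^{n-1}-1}$ were a $p$-th power $z^p$, then since $a_{n-1}, c, z$ are pairwise coprime one obtains a \emph{primitive solution of a generalized Fermat equation} $x^{P}+y^{Q}=z^{R}$, where the exponents $P,Q,R$ are read off from the divisors of $d$ and of $d^{\,n-1}-1$ (e.g.\ $d\equiv 4\pmod{12}$ makes $a_{n-1}^{d}$ a fourth power and $c^{\,d^{n-1}-1}$ a cube); these equations are then ruled out by the Bennett--Chen--Dahmen--Yazdani results (Lemma \ref{b}) and by Catalan/Mih\u{a}ilescu for $n=2$. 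The case $d$ odd with $3\nmid d$ is quoted from Danielson--Fein. Without this Fermat-equation step your cases (b)(2)--(b)(5) have no proof.

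Part (a) also fails as you outline it. You want to show $2$-adically that $f^{n}(0)$ is never a square, but take $c\equiv 5\pmod 8$: then $f^{2}(0)=2u$ with $u$ a unit, so $f^{3}(0)=4u^{2}+c^{-1}\equiv 4+5\equiv 1\pmod 8$, which \emph{is} a square in $\Z_2^{\times}$; the mod-$8$ obstruction evaporates at $n=3$, and no purely local square test at $2$ can close the induction. The paper instead invokes the Jones--Levy criterion (Proposition \ref{1}): $f$ has good reduction at $\nu_2$ with $\tilde f(z)=z^2+1$ bijective on residue extensions of $\mathbb{F}_2$, the residue orbit of $0$ has period $i=2$, and the number of irreducible factors of every iterate is bounded by $\nu_2(f^{2}(0))=\nu_2(1+c)=1$ when $c\equiv 1\pmod 4$. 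That global counting bound is the missing idea; square-testing each orbit value, as in Lemma \ref{2}, is exactly the approach that is known to be insufficient here (cf.\ \cite{jones2020}).
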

We note that when $d=2$ and $c=-m^2$, then $c\equiv 0, 3\pmod{4}$ and  $f(z)=z^2 +\frac{1}{c}$ is reducible.  We refer to \cite[Theorem 1.3]{jones2020} for other values of $c$ for which 
each iterate of $z^2+\frac{1}{c}$ is irreducible over $\mathbb{Q}$. The case $d>3$ odd and not divisible by $3$ follows from \cite[Theorem 7]{danielson2002}. From Theorem \ref{main1} and the remarks made in \cite{danielson2002} and , we believe that the following conjecture is true. 
\begin{conj}\label{stable}
	If $f(z)=z^d+\frac{1}{c},c\in{\mathbb{Z}\setminus\{0\}}, d\geq{3}$, is irreducible over $\mathbb{Q}$ then $f(z)$ is 
	stable over $\mathbb{Q}$.    
	\end{conj}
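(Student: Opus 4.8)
The plan is to replace the global statement by an inductive, local one and then reduce the whole conjecture to a single elementary-looking Diophantine assertion about the critical orbit of $f$. Write $f(z)=z^d+\frac1c$ and suppose inductively that $f^{n-1}$ is irreducible; let $\al$ be a root and $K_{n-1}=\Q(\al)$, so $[K_{n-1}:\Q]=d^{n-1}$. Since $f^n(z)=f^{n-1}(f(z))$, the iterate $f^n$ is irreducible over $\Q$ if and only if $z^d-(\al-\frac1c)$ is irreducible over $K_{n-1}$. By Capelli's theorem this holds exactly when $\be_n:=\al-\frac1c$ is not a $p$-th power in $K_{n-1}$ for every prime $p\mid d$, together with $\be_n\notin -4\,K_{n-1}^{*4}$ when $4\mid d$. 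Thus stability is equivalent to verifying these non-power conditions for all $n\geq 2$ and all $p\mid d$; Theorem \ref{main1} already does this for the listed shapes of $d$, so the remaining task is to remove the restrictions on $d$.

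Next I would use the norm to connect $\be_n$ to the critical orbit. Because $f^{n-1}$ is monic, $N_{K_{n-1}/\Q}(\be_n)=(-1)^{d^{n-1}}f^{n-1}(\tfrac1c)=\pm f^n(0)$, so the norm of the critical element $\be_n$ is precisely the $n$-th critical-orbit value. Writing $f^n(0)=N_n/c^{d^{n-1}}$ in lowest terms, the numerators satisfy $N_1=1$ and $N_n=N_{n-1}^d+c^{d^{n-1}-1}$, and one checks $\gcd(N_n,c)=1$. For any prime $\ell\nmid c$ one then has $v_\ell\big(N_{K_{n-1}/\Q}(\be_n)\big)=v_\ell(N_n)=\sum_{v\mid\ell}f_v\,w_v(\be_n)$, the sum over places $v$ of $K_{n-1}$ above $\ell$, with $w_v$ the normalized valuation and $f_v$ the residue degree. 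Hence if $v_\ell(N_n)\not\equiv 0\pmod p$ for even one prime $\ell\nmid c$, some $w_v(\be_n)\not\equiv 0\pmod p$, so $\be_n$ is not a $p$-th power in the completion $(K_{n-1})_v$, and therefore not in $K_{n-1}$. Choosing $\ell$ odd also disposes of the $4\mid d$ clause, since then $w_v(-4)=0$ forces $w_v$ of any element of $-4\,K_{n-1}^{*4}$ to be $\equiv 0\pmod 4$.

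Since all prime factors of $N_n$ are coprime to $c$, the existence of such an $\ell$ is equivalent to $N_n$ not being $\pm$ a perfect $p$-th power. The conjecture therefore follows if one proves: for every irreducible $f$, every $n\geq 2$, and every prime $p\mid d$, the integer $N_n=N_{n-1}^d+c^{d^{n-1}-1}$ is not of the form $\pm m^p$ (with the prime $2$ requiring an odd $\ell$ with $v_\ell(N_n)$ odd, or else a direct computation of $v_2(N_n)$ from the recurrence). My plan to attack this is two-pronged. For large $|c|$ one exploits the size estimate $N_n\sim c^{d^{n-1}-1}$ together with $d^{n-1}-1\equiv -1\pmod p$ (valid because $p\mid d$ and $n\geq 2$): the dominant term is then one power short of a $p$-th power, and comparing with the spacing of consecutive $p$-th powers near $N_n$ should exclude $N_n=\pm m^p$. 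For small $|c|$, and to gain uniformity in $n$, I would instead fix a prime $\ell\nmid c$ and study $N_n \bmod \ell^2$: because $d^{n-1}\bmod(\ell-1)$ is eventually periodic, so is $(N_n\bmod\ell^2)$, and one tries to arrange $v_\ell(N_n)=1$ whenever $\ell\mid N_n$, producing a prime to the first power that rules out all $p\mid d$ at once.

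The hard part will be uniformity across all prime divisors $p$ of $d$ and all $n$ simultaneously. A modular argument producing a good $\ell$ depends on the orbit of $c$ modulo $\ell$ and on $d\bmod(\ell-1)$, and an $\ell$ convenient for one $p\mid d$ need not serve another; when $d$ carries several prime factors — precisely the cases left open by Theorem \ref{main1} — no single such device is visible, and the large-$|c|$ gap estimate degrades in intermediate ranges. Moreover the reduction above is only sufficient: should the norm $f^n(0)$ happen to be a $p$-th power, one must still decide whether $\be_n$ itself is a $p$-th power in $K_{n-1}$, which demands a genuine local–global descent, the higher-degree analogue of the discriminant/Stoll refinement that makes the $d=2$ theory clean. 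Finally, the core claim $N_{n-1}^d+c^{d^{n-1}-1}\neq\pm m^p$ is a statement about perfect powers in a fast-growing nonlinear recurrence, a Diophantine problem that in general resists elementary treatment. It is the confluence of these three obstacles that, in my view, confines Theorem \ref{main1} to special $d$ and leaves the full conjecture open.
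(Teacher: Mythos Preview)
The statement you are attempting is Conjecture~\ref{stable}, which the paper does \emph{not} prove unconditionally; it establishes only the special cases in Theorem~\ref{main1} and a full conditional proof in Theorem~\ref{main2} assuming the explicit $abc$-conjecture. So there is no unconditional proof in the paper to compare against, and your final paragraph correctly recognises that your outline does not close the gap either.

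Your reduction is essentially identical to the paper's. The passage from irreducibility of $f^{n-1}$ to Capelli's criterion for $z^d-(\alpha-\tfrac1c)$, the norm computation $N_{K_{n-1}/\Q}(\beta_n)=\pm f^n(0)$, and the conclusion that it suffices to show the numerator $N_n=N_{n-1}^d+c^{d^{n-1}-1}$ is not a $p$-th power for each prime $p\mid d$ --- all of this is exactly Lemma~\ref{7} and the paragraph following its proof. Your handling of the $4\mid d$ clause via an odd prime $\ell$ is a minor variant of the paper's argument, which instead observes that non-squareness of the norm already rules out $-4K(\beta)^4$.

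Where you diverge is in attacking the Diophantine residue. You propose a size/gap estimate for large $|c|$ and modular/periodicity arguments for small $|c|$. The paper does neither. For Theorem~\ref{main1} it rewrites $N_{n-1}^d+c^{d^{n-1}-1}=z^p$ as a generalized Fermat equation $X^a+Y^b=Z^c$ with a signature covered by the catalogue in Lemma~\ref{b}, exploiting the particular exponent structure forced by the listed shapes of $d$. For Theorem~\ref{main2} it applies the explicit $abc$-conjecture (Proposition~\ref{abc-LS}) directly to the triple $(N_{n-1}^d,\,c^{d^{n-1}-1},\,z^p)$, which yields the inequality $\tfrac{1}{p}+\tfrac{1}{d}+\tfrac{1}{d^{n-1}-1}>\tfrac{4}{7}$ and forces a contradiction for $n\ge3$ after a short case analysis. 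Your ``spacing of consecutive $p$-th powers'' heuristic is morally an $abc$ statement, but without invoking $abc$ (or an effective Baker-type lower bound) it cannot be made uniform in $n$ and $c$; and the modular approach you sketch is what the paper uses for Theorem~\ref{main3}, not for the present conjecture. So your reduction is right and matches the paper, but your proposed endgame misses the two tools the paper actually deploys: the known generalized Fermat results and the explicit $abc$-conjecture.
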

Though we are not able to completely prove this conjecture unconditionally, by using an explicit version of $abc$-conjecture 
due to Baker \cite{baker2004} (see Conjecture \ref{exp-abc}), we are able to show that the Conjecture \ref{stable} is true. 

	\begin{theorem}\label{main2}
	The explicit $abc-$Conjecture implies Conjecture \ref{stable}. That is $f(z)=z^d+\frac{1}{c}$ with  $c\in{\mathbb{Z}\setminus\{0\}}$ and $d\geq 3$ is stable over $\mathbb{Q}$ whenever $f(z)$ is irreducible over $\Q$. 
	    \end{theorem}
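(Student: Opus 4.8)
The plan is to reduce the conclusion to a statement about perfect powers in the critical orbit of $f$, and then to exclude such powers by feeding the identity defining the orbit into the explicit $abc$-Conjecture; the exponent count will close precisely because $d\ge 3$.

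First I would set up the reduction. Writing $f^{n}(0)=w_{n}/c^{\,d^{n-1}}$ with $w_{n}\in\mathbb{Z}$, one has $w_{1}=1$ and $w_{n+1}=w_{n}^{\,d}+c^{\,d^{n}-1}$; an immediate induction gives $w_{n}\equiv 1\pmod{c}$, so $\gcd(w_{n},c)=1$, the displayed fraction is in lowest terms, $\gcd(w_{n},w_{n-1})=1$, and $w_{n}\neq 0$ whenever $|c|\ge 2$. Applying Capelli's criterion along the tower of iterates, together with the norm identity $N_{\mathbb{Q}(\beta)/\mathbb{Q}}\!\big(\beta-\tfrac1c\big)=\pm f^{n}\!\big(\tfrac1c\big)=\pm f^{\,n+1}(0)$ for a root $\beta$ of an irreducible $f^{n}$ — this is the same reduction already used for Theorem~\ref{main1} — an irreducible $f$ is stable over $\mathbb{Q}$ as soon as, for every $m\ge 2$ and every prime $p\mid d$, the integer $w_{m}$ is not a perfect $p$-th power; the extra clause $\beta-\tfrac1c\notin -4\,\mathbb{Q}(\beta)^{4}$ that Capelli requires when $4\mid d$ is subsumed, since it would force $f^{m}(0)\in\mathbb{Q}^{4}$. (Here one uses that, since $\gcd(w_{m},c)=1$ and $p\mid d^{m-1}$, membership $f^{m}(0)\in\mathbb{Q}^{p}$ is equivalent to $w_{m}$ being a perfect $p$-th power in $\mathbb{Z}$.) The case $|c|=1$ is immediate — $c=-1$ gives the reducible $z^{d}-1$, and for $c=1$ the polynomial $z^{d}+1$ is irreducible only when $d=2^{r}$, which is part of Theorem~\ref{main1} — so assume $|c|\ge 2$ from now on.

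Now suppose, for contradiction, that $w_{m}=\pm t^{p}$ with $m\ge 2$, $p\mid d$ prime and $t\in\mathbb{Z}_{>0}$. After moving any negative summands to the other side, $w_{m-1}^{\,d}+c^{\,d^{m-1}-1}=w_{m}$ is a genuine $abc$-triple whose three entries are pairwise coprime and whose radical is $\mathrm{rad}\big(w_{m-1}^{\,d}\,c^{\,d^{m-1}-1}\,w_{m}\big)=\mathrm{rad}(w_{m-1}\,c\,t)\le |w_{m-1}|\,|c|\,t$. Conjecture~\ref{exp-abc}, after the standard estimate of the factor $(\log N)^{\omega(N)}/\omega(N)!$, yields for any fixed $\epsilon>0$ an effective $K_{\epsilon}$ with $\max\{|w_{m-1}|^{d},|c|^{\,d^{m-1}-1},|w_{m}|\}<K_{\epsilon}\,(|w_{m-1}|\,|c|\,t)^{1+\epsilon}$. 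Writing $X$ for the largest of the three quantities on the left and bounding the remaining two together with $t=|w_{m}|^{1/p}$ by the appropriate powers of $X$, all sign sub-cases collapse to $X^{\,1-(1+\epsilon)\theta}<K_{\epsilon}$, where $\theta=\tfrac1d+\tfrac1{\,d^{m-1}-1\,}+\tfrac1p$ when $m\ge 3$ and the sharper $\theta=\tfrac1{d-1}+\tfrac1p$ when $m=2$ (there the summand $w_{1}^{\,d}=1$ does not enter the radical). The key point is that $\theta\le\tfrac56<1$ for every $d\ge 3$, every prime $p\mid d$ and every $m\ge 2$: since $p\mid d$ forces $d\ge p$, one has $\tfrac1d+\tfrac1p\le\tfrac2p\le 1$ with equality only for $d=p=2$, excluded by $d\ge 3$, and the remaining term is small; a short finite check gives the uniform bound $\tfrac56$. (This is exactly where $d\ge 3$ is used — for $d=2$ one gets $\theta\ge1$ and the argument fails, consistent with the $d=2$ case being only partially settled.) Fixing $\epsilon=\tfrac1{10}$ makes $1-(1+\epsilon)\theta\ge\tfrac1{12}$ a positive absolute constant, so $X<K_{1/10}^{\,12}=:K$, an effective absolute bound. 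In particular $|c|^{\,d^{m-1}-1}<K$.

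Since $|c|\ge 2$, the bound $|c|^{\,d^{m-1}-1}<K$ forces $d$ (it is $<1+\log_{2}K$), then $m$, and then $|c|$ all to lie below explicit quantities, leaving a finite list of triples $(d,c,m)$ on which it remains to verify that $w_{m}$ is not a perfect $p$-th power. Theorem~\ref{main1} already gives the stability of $f$ unconditionally for every odd $d$ and for many even $d$, so only a short list of even $d$ — the smallest ones not covered there — actually survives, each with $|c|$ in a computationally feasible range; for those the verification is a direct (bignum) computation, with several candidate values of $c$ eliminated at once because $f$ is then reducible (for instance $c=-4$ when $2\mid d$, or $c=-27$ when $3\mid d$). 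The two places I expect real work are: making the sign normalisation and exponent bookkeeping in the $abc$ step fully uniform, so that every configuration truly reduces to $\tfrac1{1+\epsilon}>\tfrac1d+\tfrac1{\,d^{m-1}-1\,}+\tfrac1p$ (it does, thanks to $d\ge 3$); and controlling the size of the effective constant $K$ well enough that the residual search is genuinely finite and feasible — which is precisely why one needs the \emph{explicit} form of $abc$ and not merely $abc$ itself.
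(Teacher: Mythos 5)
Your overall skeleton --- Capelli plus the norm computation reducing stability to the statement that the numerators $w_m$ are never perfect $p$-th powers for $p\mid d$, and then feeding the recurrence $w_{m-1}^{d}+c^{\,d^{m-1}-1}=w_m$ into the explicit $abc$-conjecture --- is exactly the paper's (Lemma \ref{7} and the discussion following it). The genuine gap is quantitative, and it sits precisely where you flag ``real work''. Your uniform bound $\theta\le\frac56$ forces $\epsilon<\frac15$, but Conjecture \ref{exp-abc} only yields $c<N^{1+\epsilon}$ once $N\ge N_\epsilon$, and for $\epsilon$ of size $\frac1{10}$ the threshold $N_\epsilon$ --- hence your constant $K=K_{1/10}^{12}$, hence the range $|c|<K^{1/(d-1)}$ of the residual search --- is astronomically large (already for $\epsilon=\frac7{12}$ the effective threshold is $\exp(204.75)$, and it grows doubly exponentially as $\epsilon$ shrinks). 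The claim that the surviving triples $(d,c,m)$ lie ``in a computationally feasible range'' is therefore unsubstantiated, and without actually performing that verification the theorem is not proved. There is also a small unaddressed degeneracy: for $(d,c,m)=(4,2,2)$ one has $w_2=1+2^3=9$, a perfect square, so the non-power criterion genuinely fails there and one must check irreducibility of $f^2$ directly (the paper does this; it matters because your finite list cannot consist only of non-power verifications).

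The paper closes the gap by never needing $\theta<\frac1{1+\epsilon}$ with $\epsilon$ small. The case $m=2$ is settled unconditionally by Catalan (Lemma \ref{a}); for odd primes $p\mid d$ the equation $z^p+(-x)^p=y^k$ has all exponents $\ge 3$ and is excluded by Proposition \ref{abc-Beal}, a consequence of explicit $abc$ obtained by a separate case analysis rather than a single radical estimate; and for $p=2$ the exponent $\frac74$, valid for \emph{all} $N$ by Proposition \ref{abc-LS}, already forces $\frac1{14}<\frac1d+\frac1{d^2-1}$, hence $d\le 14$. After Theorem \ref{main1} only $d=14$ survives, where odd $n$ is killed by the $(2,3,7)$ case of Lemma \ref{b} and even $n\ge 4$ by the $\epsilon=\frac7{12}$ bound, whose threshold $\exp(204.75)$ is met automatically because $|c|^{14^{n-1}-1}\ge 2^{14^3-1}$. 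To salvage your uniform argument you would need to discard the $d$ covered by Theorem \ref{main1} \emph{before} choosing $\epsilon$ (so that $\theta\le\frac1{13}+\frac12$ and $\epsilon=\frac7{12}$ suffices) and separately dispose of the regime $N<\exp(204.75)$; as written, the argument does not close.
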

When $f(z)=z^d+\frac{1}{c}$ reducible over $\Q$, nothing much is known about the irreducible factors of iterates of $f(z)$ 
for  $d\geq 3$. In this paper, we prove the following result about the eventual stability of $f(z)=z^3+\frac{1}{c}$ when it is reducible.  
		\begin{theorem}\label{main3} 
		Let $f(z)=z^3+\frac{1}{c}, c\in{\mathbb{Z}\setminus\{0\}}$. If $f(z)$ is reducible over $\mathbb{Q}$ then 
		$f^n(z)$ has exactly two irreducible factors over $\mathbb{Q}$ for each $n\in{\mathbb{N}}$ and for $c$ 
		with $|c|\leq{10^{12}}.$
	\end{theorem}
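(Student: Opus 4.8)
The plan is to use the factorization of $f$ forced by reducibility to set up an induction governed by Capelli's lemma, reduce the surviving hypotheses by a norm computation to the statement that two explicit integer sequences never take cube values, and verify that statement through classical Diophantine input together with a finite computation over $|c|\le 10^{12}$.

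Since $f$ is cubic it is reducible over $\Q$ exactly when it has a rational root, which forces $-1/c$ to be a rational cube, hence $c=e^{3}$ for some $e\in\Z\setminus\{0\}$. Put $\alpha=-1/e$, so $\alpha^{3}=-1/c$ and $f(z)=(z-\alpha)(z^{2}+\alpha z+\alpha^{2})$, the quadratic being irreducible over $\Q$ (its discriminant is $-3\alpha^{2}<0$) and splitting over $\Q(\omega)$, $\omega$ a primitive cube root of unity. Iterating, $f^{n}(z)=f^{n-1}(z)^{3}-\alpha^{3}=A_{n}(z)B_{n}(z)$ with $A_{n}(z)=f^{n-1}(z)-\alpha$ and $B_{n}(z)=f^{n-1}(z)^{2}+\alpha f^{n-1}(z)+\alpha^{2}$, monic of degrees $3^{n-1}$ and $2\cdot3^{n-1}$. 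As these degrees are positive and distinct, $A_{n}$ and $B_{n}$ are coprime once each is irreducible, so $f^{n}=A_{n}B_{n}$ then has exactly two irreducible factors; it therefore suffices to prove, by induction on $n$, that $A_{n}$ and $B_{n}$ are irreducible over $\Q$, the case $n=1$ being clear.

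For the inductive step, write $A_{n+1}=A_{n}\circ f$ and $B_{n+1}=B_{n}\circ f$. By Capelli's lemma, if $A_{n}$ is irreducible with root $\beta_{n}$ then $A_{n+1}$ is irreducible over $\Q$ if and only if $z^{3}-(\beta_{n}+\alpha^{3})$ is irreducible over $\Q(\beta_{n})$, i.e.\ if and only if $\beta_{n}+\alpha^{3}\notin\Q(\beta_{n})^{3}$, and similarly for $B_{n+1}$ with a root $\gamma_{n}$ of $B_{n}$. Taking norms down to $\Q$ and using that $A_{n},B_{n}$ are monic and that $f^{n-1}(-\alpha^{3})=f^{n-1}(f(0))=f^{n}(0)$, one computes $N_{\Q(\beta_{n})/\Q}(\beta_{n}+\alpha^{3})=\alpha-f^{n}(0)$ and $N_{\Q(\gamma_{n})/\Q}(\gamma_{n}+\alpha^{3})=f^{n}(0)^{2}+\alpha f^{n}(0)+\alpha^{2}$; since a cube has a cube norm, it is enough to show that for every $n\ge1$ neither $\alpha-f^{n}(0)$ nor $f^{n}(0)^{2}+\alpha f^{n}(0)+\alpha^{2}$ is a cube in $\Q$. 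Writing $f^{n}(0)=N_{n}/e^{3^{n}}$ in lowest terms, so that $N_{1}=1$ and $N_{n+1}=N_{n}^{3}+e^{3^{n+1}-3}$, and using $\gcd(N_{n},e)=1$ and $3\mid 3^{n}$ to see that the denominators are perfect cubes, this reduces to the assertion that the integers $S_{n}:=N_{n}+e^{3^{n}-1}$ and $T_{n}:=N_{n}^{2}-N_{n}e^{3^{n}-1}+e^{2(3^{n}-1)}$ are never perfect cubes; one also notes $S_{n}T_{n}=N_{n+1}$ with $\gcd(S_{n},T_{n})\mid3$.

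The heart of the proof is the non-cube property of $S_{n}$ and $T_{n}$. For $n=1$ one has $S_{1}=e^{2}+1$ and $T_{1}=e^{4}-e^{2}+1$: the first is never a cube by the classical fact that $x^{3}-y^{2}=1$ has only the solution $(1,0)$, while the second is a cube, in the range $|e|\le10^{4}$, only for $e=\pm1$, and those cases $c=\pm1$ are dispatched directly, since there $B_{2}$ is irreducible because $\gamma_{1}+\alpha^{3}$ is a unit of $\Z[\omega]$ different from $\pm1$, whereas any cube root in $\Q(\omega)$ of a unit of $\Z[\omega]$ is again a unit, hence a sixth root of unity, whose cube is $\pm1$. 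For $n\ge2$ and fixed $e$ with $1\le|e|\le10^{4}$, the orbit $\bigl(f^{n}(0)\bmod p\bigr)_{n}$ is eventually periodic for any prime $p\nmid c$, hence so are $S_{n}$ and $T_{n}$ modulo $p$; choosing $p$ so that the eventual residues are not cubes modulo $p$ forces $S_{n},T_{n}$ to be non-cubes for all $n$ beyond an explicit bound $n_{0}(e)$, and the finitely many pairs $(e,n)$ with $n<n_{0}(e)$ are settled by direct computation, the whole verification being carried out over all $c=e^{3}$ with $|c|\le10^{12}$. The main obstacle is precisely this: ruling out cube values of $S_{n}$ and $T_{n}$ uniformly in $n$ and $e$ appears to require an effective $abc$-type input of the sort used to prove Theorem \ref{main2}, and even for a single $e$ one must exhibit a modulus certifying the non-cube property for all large $n$ while keeping the residual finite checks feasible; this is what confines the unconditional statement to $|c|\le10^{12}$.
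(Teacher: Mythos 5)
Your outline reproduces the paper's strategy: the same factorization $f=g_{1}g_{2}$ with $g_{1}(z)=z+\tfrac1m$ and $g_{2}(z)=z^{2}-\tfrac{z}{m}+\tfrac1{m^{2}}$ (your $\alpha=-1/e$), the same Capelli-plus-norm reduction showing that $g_{i}(f^{n}(z))$ remains irreducible provided $g_{i}(f^{j}(0))$ is never a cube in $\Q$, Catalan's theorem for $m^{2}+1$, and congruence certificates modulo suitable primes $p\equiv 1\pmod 3$ for the remaining indices, restricted to $|m|\le 10^{4}$, i.e.\ $|c|\le 10^{12}$. Your norm computations and the passage to the integer sequences $S_{n}$, $T_{n}$ (the numerators $w_{n+1}$, $x_{n+1}$ of the paper) are correct.

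There is, however, one genuine gap: you omit the rigid divisibility property of the sequences $(g_{i}(f^{n}(0)))_{n\ge 1}$, which is the ingredient the paper uses to make the finite computation feasible. The paper proves (following Jones) that these are rigid divisibility sequences, so once $w_{2}=m^{2}+1$ (resp.\ $x_{2}=m^{4}-m^{2}+1$) is not a cube, some prime divides it to an exponent $\not\equiv 0\pmod 3$ and divides \emph{every} $w_{2n}$ (resp.\ $x_{2n}$) to exactly that exponent; hence all even-indexed terms are non-cubes for all $m$, with no computation. The congruence tables then only need to certify the other parity class. In your plan the congruence certificates must cover all $n\ge 2$ of both parities, and nothing you write guarantees that a suitable prime exists for each $e$ in the range --- eventual periodicity of the orbit mod $p$ gives you a well-defined finite check, but not its success, and the paper's verified data does not cover what you need. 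Two smaller points: the paper determines \emph{all} $m$ with $m^{4}-m^{2}+1$ a cube from the integral points of $y^{2}-y+1=x^{3}$ (curve 243.a1 in the LMFDB), namely $m=\pm 1$, rather than by a range search; and for $e=\pm 1$ your unit argument in $\Z[\omega]$ only settles the step from $B_{1}$ to $B_{2}$, so those cases still require the congruence machinery for $T_{n}$, $n\ge 2$ (the paper checks these modulo $7$).
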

	We believe that above result should be true for all $c$ and we propose the following conjecture. 
\begin{conj}\label{d=3-factors}
	Let $f(z)=z^3+\frac{1}{c}, c\in{\mathbb{Z}\setminus\{0\}}$. If $f(z)$ is reducible over $\mathbb{Q}$ then 
	$f^n(z)$ has exactly two irreducible factors over $\mathbb{Q}$ for each $n\in{\mathbb{N}}$ and for all $c$.    
	\end{conj}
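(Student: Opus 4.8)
Since a cubic over $\Q$ is reducible precisely when it has a rational root, $f(z)=z^3+\frac1c$ is reducible if and only if $-1/c$ is the cube of a rational number; for $c\in\Z$ this forces $c=\varepsilon m^3$ with $\varepsilon\in\{\pm1\}$ and $m\in\N$, and then $f$ has the unique rational root $r$ with $r^3=-1/c$, so $|r|=1/m$. Writing $f(z)=z^3-r^3=(z-r)(z^2+rz+r^2)$ and iterating, for every $n\ge 2$
\begin{equation}\label{prop-fact}
f^n(z)=\bigl(f^{\,n-1}(z)\bigr)^3-r^3=\underbrace{\bigl(f^{\,n-1}(z)-r\bigr)}_{A_n(z)}\cdot\underbrace{\bigl(f^{\,n-1}(z)^2+r\,f^{\,n-1}(z)+r^2\bigr)}_{B_n(z)} .
\end{equation}
A common root $\alpha$ of $A_n$ and $B_n$ would satisfy $f^{\,n-1}(\alpha)=r$ and $f^{\,n-1}(\alpha)\in\{r\om,r\om^2\}$, which is impossible; hence $A_n,B_n$ are coprime, and since $\deg A_n+\deg B_n=\deg f^n$, the polynomial $f^n$ has exactly two irreducible factors for all $n$ if and only if each of $A_n$ and $B_n$ is irreducible over $\Q$. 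The conjecture thus reduces to these two irreducibility statements for every $n\ge1$, the case $n=1$ being the factorization of $f$ itself.

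The plan is to analyse both factors through a tower of cube-root extensions. Since $B_n(z)=q\bigl(f^{\,n-1}(z)\bigr)$ with $q(w)=w^2+rw+r^2$ irreducible over $\Q$ with roots $r\om,r\om^2$, Capelli's lemma reduces irreducibility of $B_n$ over $\Q$ to that of $f^{\,n-1}(z)-r\om$ over $\Q(\om)$, while $A_n=f^{\,n-1}(z)-r$ is treated over $\Q$ directly. In both cases one descends the backward orbit: since $f(w)=w^3-r^3$, a preimage of $x$ satisfies $w^3=x+r^3$, so each level contributes one cyclic cubic step, and because a cubic $z^3-a$ is irreducible over a field of characteristic $\ne3$ exactly when $a$ is not a cube there, repeated use of Capelli shows that $A_n$ (respectively $B_n$) is irreducible for all $n$ if and only if a specific backward-orbit value $\theta_k+r^3$ fails to be a cube in the field generated so far, at every level $k$. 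The bottom level is explicit: $A_2=z^3-(r^3+r)$ is irreducible iff $r^3+r=\pm(m^2+1)/m^3$ is not a cube, i.e.\ iff $m^2+1$ is not a perfect cube, which holds for all $m\ge1$ by the classical resolution of the Mordell equation $y^2=x^3-1$.

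To control the infinitely many non-cube conditions I would proceed in two regimes, in parallel with the passage from Theorem~\ref{main1} to Theorem~\ref{main2}. For the generic $m$ I would isolate a non-Archimedean place at which the relevant backward-orbit value has valuation $\not\equiv0\pmod3$ at every level, using that the cubic step trisects valuations; such a place forces irreducibility of $A_n$ and $B_n$ at all levels at once, in the spirit of the valuation criteria behind Theorem~\ref{main2}. For the residual $m$, where no single place obstructs factorization, the obstruction is genuinely Diophantine: I would translate each potential cube coincidence into the assertion that an explicit integer built from the forward critical orbit $\{f^k(0)\}$ — which governs the discriminant of $f^n$, the critical point being $0$ with critical value $1/c$ — is a perfect cube, and then apply the explicit $abc$-conjecture of Baker (Conjecture~\ref{exp-abc}) to bound $|c|=m^3$ effectively. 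The resulting finite range is already covered unconditionally by the computation behind Theorem~\ref{main3} (all $|c|\le10^{12}$), so, conditionally on the explicit $abc$-conjecture, $A_n$ and $B_n$ are irreducible for every $n$ and every admissible $c$, which is exactly Conjecture~\ref{d=3-factors}.

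I expect the main obstacle to be uniformity in the iterate $n$. The valuation argument must exhibit one place valid simultaneously at all levels of the infinite tower, but the additive shift by $r^3$ can repair a trisected valuation at a deeper level, so that no fixed place need work throughout; making this propagation robust, or proving it must fail only on a Diophantine locus, is the delicate point. Correspondingly, the $abc$ input must bound the exceptional $m$ \emph{independently of} $n$, since a first cube coincidence could in principle appear at an arbitrarily deep iterate, and one must keep the discriminant-to-critical-orbit dictionary clean both for the two conjugate branches of $B_n$ over $\Q(\om)$ and for the non-Galois cubic extensions arising from $A_n$.
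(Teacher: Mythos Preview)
This statement is presented in the paper as an open conjecture; there is no proof to compare against. The paper proves only the restricted Theorem~\ref{main3} ($|c|\le10^{12}$), and does so computationally: after using rigid divisibility (Lemma~\ref{rigid divisible}) to dispose of the even-indexed terms, it finds for each $m\le10^{4}$ a small prime $p$ modulo which $g_i(f^{n}(0))$ is never a cube.

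Your reduction matches the paper's exactly. Your factors $A_n,B_n$ are $g_1(f^{\,n-1})$ and $g_2(f^{\,n-1})$ with $g_1,g_2$ as in~\eqref{g1g2}, and your Capelli-plus-norm step is Lemma~\ref{4}. The base case ``$m^{2}+1$ is not a cube'' also agrees (you cite the Mordell curve $y^{2}=x^{3}-1$, the paper cites Mih\u{a}ilescu). You do not invoke rigid divisibility, which the paper uses to halve the work, but that is a minor omission.

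The genuine gap is everything beyond the computed range. Your valuation proposal is only a restatement of the goal: exhibiting, for every $m$, a prime that witnesses non-cubeness at every level is precisely what the paper achieves \emph{by machine} for $m\le10^{4}$, and you supply no mechanism to produce such a prime in general. As for $abc$, the uniformity-in-$n$ obstacle you flag is decisive, and the hoped-for effective bound on $m$ does not materialize. In Theorem~\ref{main2} the relation $a_{n-1}^{d}+c^{d^{n-1}-1}=z^{p}$ succeeds because $d\ge4$ forces the radical to be tiny relative to the maximum term. Here $d=3$, $c=m^{3}$, and writing $w$ for the numerator of $g_1(f^{n}(0))$ one has $w-a_{n-1}^{3}=m^{3^{n}-3}(m^{2}+1)$; if $w=z^{3}$ then $|a_{n-1}|$ and $|z|$ are both of order $m^{3^{n-1}}$, so the radical is of order $m^{2\cdot3^{n-1}}$ while the largest term is of order $m^{3\cdot3^{n-1}}$. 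The limiting exponent ratio is $3/2<7/4$, hence Proposition~\ref{abc-LS} with exponent $7/4$ is satisfied for every $n$ and every $m$ and yields no bound on $m$ whatsoever; taking a smaller $\epsilon$ helps only for $n$ large relative to $N_{\epsilon}$, not uniformly, so you never land inside the range $|c|\le10^{12}$ covered by Theorem~\ref{main3}. A proof of Conjecture~\ref{d=3-factors} along these lines would require a genuinely different Diophantine input, not a transplant of the argument from Theorem~\ref{main2}.
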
	
	
	We prove Theorems \ref{main1} and \ref{main2} in Section 4.  Theorem \ref{main3} is proved in Section 3. In Section 2, we give the preliminaries.

	\section{Preliminaries}
	Let $K$ be a number field, $\phi(z)\in K(z)$ and let $\alpha\in \mathbb{P}^1(K).$ 
	Assume that $\phi(z)=\frac{f(z)}{g(z)}$, where  $f(z)$ and $g(z)$ are coprime in $K[z]$. The degree of a 
	rational function $\phi(z)=\frac{f(z)}{g(z)}$ is defined as $\deg{\phi(z)}=\max\{\deg{f(z)},\deg{g(z)}\}$. 
	Let $\phi^n(z)=\phi(\phi^{n-1}(z))$, the $n$-th iterate of $\phi(z)$ and put $\phi^n(z)=\frac{f_{n}(z)}{g_{n}(z)}$, 
	where $f_{n}(z), g_{n}(z)\in K[z]$ are coprime  polynomials. 
\begin{defn}The pair $(\phi,\alpha)$ is said to be stable over $K$ if $f_{n}(z)-\alpha g_{n}(z)$ is irreducible over $K$ for each $n\in{\mathbb{N}}$.
    \end{defn} 
However stability is not preserved under field extensions. A weaker condition called eventual stability behaves well with respect to finite field extensions. 
\begin{defn} If there exist a constant $C(\phi,\alpha)$ such that the number of irreducible factors of $f_{n}(z)-\alpha g_{n}(z)$, for all $n\geq{1}$, is bounded by $C(\phi,\alpha)$ then we say that ($\phi$,\,$\alpha$) is {\it eventually stable} over $K$. If the number of irreducible factors of $g_{n}(z)$ is similarly bounded then we say that $(\phi,\infty)$ is eventually stable.
\end{defn}  
 If $(\phi,0)$ is eventually stable then we say that $\phi(z)$ is eventually stable. Eventual stability of a rational 
function over $K$ is preserved over finite extensions of $K$.
	
	A map $\nu :K\to \mathbb{Z} \cup \{\infty\}$ is called a {\it discrete} valuation on $K$ if it satisfies the following properties:
	\begin{itemize}
		\item [(1)]  $\nu(x)=\infty \Leftrightarrow x=0$,
		\item [(2)]  $\nu(xy)=\nu(x)+\nu(y)~~  \forall\, x,~y \in K$ and
		\item [(3)]  $\nu(x+y)\geq \inf\{\nu(x),\nu(y)\}$.
	\end{itemize}
	
	Suppose $p$ is a rational prime. Then every rational number $x$ can be written as $p^t x_{0}$ where $t\in\mathbb{Z}$ and $p$ divides neither the numerator nor the denominator of $x_{0}$. Then, $\nu_{p}:\mathbb{Q}\setminus\{0\}\to \mathbb{Z}$ defined by  $\nu_{p}(x)=t$ is a discrete valuation which is called the {\it $p$-adic valuation} on $\mathbb{Q}$.
	 
 	Given a discrete valuation $\nu$ on the number field $K$, $R$ denote the following ring $\{x\in K:\nu(x)\geq{0}\}$. Then $\mathfrak{p}=\{x\in K:\nu(x)>0\}$ is the unique maximal ideal of the ring $R$. The field $k=R/\mathfrak{p}$ is called the residue field of $\nu$.
	Denote by $\tilde{x} \in {\mathbb{P}^1(\kappa)}$ the reduction modulo $\mathfrak{p}$ of $x\in \mathbb{P}^1(K)$. Further we  denote by $\tilde{f}(z)$ the polynomial obtained from $f(z)\in R[z]$ by reducing each coefficient modulo $\mathfrak{p}$.
	
	Let $\phi(z)=\frac{f(z)}{g(z)}\in{K(z)}$ where $f(z),g(z)\in{R[z]}$. Then we say that $\frac{f(z)}{g(z)}$ is {\it normalized} if $f(z),g(z)\in{R[z]}$ are coprime and atleast one of the coefficients of $f(z)$ or $g(z)$ is a unit in $R$.
	  Define $\tilde{\phi}(z)=\frac{\tilde{f}(z)}{\tilde{g}(z)}$. Suppose $\phi(z)$ is non constant. Then  $\phi(z) \in {K(z)}$ is said to have {\it good reduction} at $\nu$ if $\deg(\tilde{\phi}(z))=\deg(\phi(z))$.
	We say that $\phi(z)$ is {\it bijective on residue extensions} for the discrete valuation $\nu$ on $K$ if $\tilde{\phi}(z)$ defines a bijection on $\mathbb{P}^1(E)$ for every finite extension $E$ of the residue field $\kappa$ of $\nu$.
	\begin{prop}\label{1} \cite{jones2017}
		Let $K$ be a number field with discrete valuation $\nu$, $\phi(z)\in{K(z)}$ be such that $\deg{\phi(z)}\geq{2}$ and let $\alpha\in{\mathbb{P}^{1}(K)}$ be non-periodic with respect to $\phi(z)$. Assume $\phi(z)$ is bijective on residue extensions for $\nu$ and has good reduction at $\nu$. Assume further $\phi^n(z) =\frac{f_{n}(z)}{g_{n}(z)}$ is normalized and $\alpha\ne{\infty}$. Then the number of  irreducible factors of $f_{n}(z)-\alpha g_{n}(z)$ over $K$ is at most
		\begin{itemize}
				\item [1.]  $\nu(\phi(\alpha)^{-1} - \alpha^{-1})$  if $\nu(\alpha)<0$,
			   \item [2.]  $\nu(\phi^i(\alpha)-\alpha)$ if $\nu(\alpha)\geq{0}$ for $ i = \min\{{n\geq{1}:\tilde{\phi}^n (\tilde{\alpha})=\tilde{\alpha}}\}.$
		 		\end{itemize}
		 
	\end{prop}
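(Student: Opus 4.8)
The plan is to reduce, by Möbius conjugations defined over $K$, to a normal form in which $\alpha=0$ is a fixed point of the reduced map, and then to run a valuation count using both hypotheses on $\phi$. Conjugating $(\phi,\alpha)$ by an element of $\mathrm{PGL}_2(R)$ preserves good reduction, bijectivity on residue extensions, and the number of irreducible factors of the numerator of $\phi^n(z)-\alpha$. So in case $1$, where $\nu(\alpha)<0$ and hence $\tilde\alpha=\infty$, conjugating by $z\mapsto 1/z$ replaces $(\phi,\alpha)$ by $\bigl(1/\phi(1/z),\,1/\alpha\bigr)$ with $\nu(1/\alpha)>0$ and turns $\nu(\phi(\alpha)^{-1}-\alpha^{-1})$ into the quantity of case $2$; thus it suffices to treat case $2$. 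There, $\mathbb{P}^1(\kappa)$ being finite and $\tilde\phi$ a bijection of it, the forward $\tilde\phi$-orbit of $\tilde\alpha$ is purely periodic; let $i$ be its exact period and put $\psi:=\phi^i$, so $\tilde\psi(\tilde\alpha)=\tilde\alpha$. Good reduction and bijectivity on residue extensions pass to $\psi$ (reduction commutes with composition of good-reduction maps, so $\tilde\psi=\tilde\phi^{\,i}$). Finally one checks that $n\mapsto(\text{number of irreducible factors of the numerator of }\phi^n(z)-\alpha)$ is non-decreasing: each irreducible factor of the $n$-th numerator contributes, after substituting $\phi$, a non-constant polynomial, and distinct factors contribute coprime polynomials. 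Hence it is enough to bound this count along $n=im$, i.e. to show the numerator of $\psi^m(z)-\alpha$ has at most $\nu(\psi(\alpha)-\alpha)=\nu(\phi^i(\alpha)-\alpha)$ irreducible factors for every $m\ge 1$.

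\emph{Normal form and two structural facts.} As $\nu(\alpha)\ge 0$, conjugating by $z\mapsto z+\alpha$ lets me assume $\alpha=0$ and $\tilde\psi(0)=0$; write $\psi^m=f_m/g_m$ normalized, so the numerator of $\psi^m(z)-0$ is $f_m$, and set $\mu:=\nu(\psi(0))=\nu(\phi^i(\alpha)-\alpha)$, finite because $\alpha$ is non-periodic. Fact $1$: every root $\theta$ of $f_m$ has $\tilde\theta=0$ — indeed $\theta\in\psi^{-m}(0)$, so extending $\nu$ to a place of $K(\theta)$ gives $\tilde\psi^{\,m}(\tilde\theta)=\widetilde{\psi^m(\theta)}=0$, and $\tilde\theta=0$ follows since $\tilde\psi^{\,m}$ is a bijection of $\mathbb{P}^1$ over the residue field fixing $0$. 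Fact $2$: $\nu(\psi^m(0))=\mu$ for all $m\ge 1$. For this, a self-map of $\mathbb{P}^1$ of degree $\ge 2$ over a field that is bijective on every finite residue extension is purely inseparable, so it factors through Frobenius and has identically vanishing derivative; hence $\tilde\psi'\equiv 0$, i.e. $\nu(\psi'(\gamma))\ge 1$ for $\gamma\in R$, so $\nu(\psi(\gamma)-\psi(0))>\nu(\gamma)$ whenever $\tilde\gamma=0$, and therefore $\nu(\psi(\gamma))=\nu(\psi(0))=\mu$ whenever $\nu(\gamma)=\mu$; induction on $m$ finishes it. (In particular $\operatorname{char}\kappa=p>0$ and $d$ is a power of $p$.)

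\emph{The count.} By Fact $1$ all roots of $f_m$ lie in the valuation ring with positive valuation, so $f_m(z)=\ell_m\prod_j q_{m,j}(z)^{e_j}$ with $\ell_m\in R$, $e_j\ge 1$, and the $q_{m,j}\in R[z]$ the distinct monic irreducible factors of $f_m$ over $K$; moreover $\nu(q_{m,j}(0))\ge 1$ for each $j$, since $q_{m,j}(0)$ equals, up to sign, the $K(\theta)/K$-norm of a root $\theta$, which lies in $\mathfrak p$ at every place above $\mathfrak p$, forcing its norm — an element of $K$ — to be a positive integer. Also $\tilde\psi^m(0)=0\ne\infty$ gives $\nu(g_m(0))=0$, hence $\nu(f_m(0))=\nu(\psi^m(0))=\mu$ by Fact $2$. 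Therefore
\[
\mu=\nu(f_m(0))=\nu(\ell_m)+\sum_j e_j\,\nu(q_{m,j}(0))\ \ge\ \sum_j 1\ =\ \bigl(\text{number of irreducible factors of }f_m\bigr),
\]
which is exactly the bound required; unwinding the reduction to $\psi=\phi^i$ and the conjugations recovers both cases of the Proposition.

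\emph{Main obstacle.} The crux is Fact $2$, in particular the implication ``bijective on all finite residue extensions $\Rightarrow$ purely inseparable $\Rightarrow$ vanishing derivative'': this is what keeps the forward orbit of $\alpha$ at the fixed valuation $\mu$ and so yields a bound independent of $n$. Without it the same bookkeeping only gives a count $\le\nu(\phi^n(\alpha)-\alpha)$, which may grow with $n$. The remaining care is routine: verifying that conjugation and iteration preserve ``normalized'', ``good reduction'' and ``bijective on residue extensions'', and handling ramification in the norm estimate $\nu(q_{m,j}(0))\ge 1$.
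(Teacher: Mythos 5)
The paper does not prove this proposition at all --- it is imported verbatim from Jones--Levy \cite{jones2017} --- so there is no internal proof to compare against; what follows is an assessment of your argument on its own terms. Your reconstruction is essentially correct, and it isolates exactly the two facts that make the result work: (i) a degree $\geq 2$ self-map of $\mathbb{P}^1$ over the residue field that is bijective on all finite residue extensions is purely inseparable, hence has identically vanishing derivative, which forces $\nu(\psi^m(0))=\nu(\psi(0))=\mu$ for all $m$ (your Fact 2); and (ii) every root of the numerator reduces to $\tilde\alpha$ by injectivity of $\tilde\psi^m$, so each monic irreducible factor contributes at least $1$ to $\nu(f_m(0))=\mu$ via the norm, giving the count. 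The ultrametric estimate $\nu(\psi(\gamma)-\psi(0))>\nu(\gamma)$ for $\tilde\gamma=0$, and the conclusion $\nu(\psi(\gamma))=\mu$ when $\nu(\gamma)=\mu$, are both sound. Two small caveats, neither of which affects the paper (which only invokes case 2 with $\phi$ a polynomial): first, your reduction of case 1 to case 2 via $z\mapsto 1/z$ silently assumes that $\infty=\tilde\alpha$ is a \emph{fixed} point of $\tilde\phi$ (period $i'=1$); if $i'>1$ your argument proves the bound $\nu(\phi^{i'}(\alpha)^{-1}-\alpha^{-1})$ rather than $\nu(\phi(\alpha)^{-1}-\alpha^{-1})$ --- though the statement as transcribed is only meaningful when $\tilde\phi(\infty)=\infty$ anyway, so this is arguably an artifact of the quotation. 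Second, the monotonicity of the factor count under $n\mapsto n+1$ has a degenerate exception when an irreducible factor's root equals $\phi(\infty)$ with $\infty$ totally ramified (the pulled-back factor can then be constant); this cannot occur for polynomials $\phi$, which is the only case the paper needs, but it deserves a sentence if you want the general statement.
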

	\begin{lemma} \cite{jones2020}\label{2} Suppose $g(x)\in{K[x]}$ is a monic irreducible polynomial with degree $d\geq{1}$ and $char(K)\neq 2$. Let $f(x)\in K[x]$ be a monic quadratic polynomial and $\gamma $ be 
	such that $f'(\gamma)=0$. If  no element of $$\{(-1)^d g(f(\gamma))\}\cup\{g(f^{n}(\gamma)):n\geq{2}\}$$ is a square in K, then $g(f^{n}(x))$ is irreducible over $K$ for each $n\in{\mathbb{N}}$.
	\end{lemma}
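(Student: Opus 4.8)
I would prove by induction on $n\ge 0$ that $h_n(x):=g(f^n(x))$ is irreducible over $K$; this contains the assertion of the lemma. Here $h_0=g$, $h_n=h_{n-1}\circ f$, and $\deg h_n=d\cdot 2^n$. The first preparatory observation is that, since $f$ is monic quadratic with $f'(\gamma)=0$, completing the square gives $f(x)=(x-\gamma)^2+f(\gamma)$, so $f(x)-t=(x-\gamma)^2-(t-f(\gamma))$ for every $t$; as $\mathrm{char}(K)\ne 2$, this quadratic is reducible over a field $L\supseteq K$ precisely when $t-f(\gamma)$ is a square in $L$.

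The engine of the induction is a standard descent principle, which I would record as a preliminary claim: if $h\in K[x]$ is monic irreducible with a root $\beta$, and $f(x)-\beta$ is irreducible over $L:=K(\beta)$, then $h(f(x))$ is irreducible over $K$. To see this, let $\delta$ be a root of $f(x)-\beta$; then $L\subseteq K(\delta)=L(\delta)$ and $[K(\delta):K]=[L(\delta):L]\,[L:K]=2\deg h=\deg\big(h(f(x))\big)$, while $\delta$ is a root of the monic polynomial $h(f(x))$, forcing $h(f(x))$ to be the minimal polynomial of $\delta$ over $K$, hence irreducible. This uses only degree counting, so no separability hypothesis is needed.

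With these in hand the inductive step runs as follows. The case $n=0$ is the hypothesis that $g$ is irreducible. For the step, suppose $h_{n-1}$ is irreducible with $n\ge 1$; pick a root $\beta$ and put $L=K(\beta)$, $D:=[L:K]=\deg h_{n-1}=d\cdot 2^{n-1}$. By the first paragraph together with the claim, $h_n=h_{n-1}(f(x))$ is irreducible over $K$ as soon as $\beta-f(\gamma)$ is not a square in $L$, and since the norm of a square is a square it suffices to show $N_{L/K}(\beta-f(\gamma))$ is not a square in $K$. I would compute this norm via characteristic polynomials: under $L\cong K[x]/(h_{n-1})$ the element $\beta-f(\gamma)$ is multiplication by $\bar x-f(\gamma)$, whose characteristic polynomial over $K$ is $h_{n-1}\big(T+f(\gamma)\big)$, so $N_{L/K}(\beta-f(\gamma))=(-1)^{D}h_{n-1}(f(\gamma))=(-1)^{D}g(f^{n}(\gamma))$. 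For $n=1$ this equals $(-1)^{d}g(f(\gamma))$, and for $n\ge 2$ the exponent $D=d\cdot 2^{n-1}$ is even, so it equals $g(f^{n}(\gamma))$; in either case the hypothesis says this is not a square in $K$, and the induction closes.

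The part needing the most care is the bookkeeping that produces exactly the set in the statement: the sign $(-1)^{d}$ is forced at the first iterate because $\deg h_0=d$ may be odd, whereas from the second iterate onward the relevant extension degree $D$ is automatically even, which is precisely why the displayed set splits as $\{(-1)^{d}g(f(\gamma))\}\cup\{g(f^{n}(\gamma)):n\ge 2\}$. Two smaller points I would flag: one should compute the norm via the characteristic polynomial (equivalently the resultant $\mathrm{Res}(h_{n-1},\,x-f(\gamma))$) rather than via Galois conjugates, so that the argument survives in positive odd characteristic where $g$, hence some $h_{n-1}$, could be inseparable; and one should note that none of the listed values is $0$ (as $0$ is a square), so $\beta-f(\gamma)\ne 0$ and $f(x)-\beta$ never collapses to the perfect square $(x-\gamma)^2$.
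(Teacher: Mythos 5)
Your proof is correct. The paper does not prove this lemma itself (it is quoted from \cite{jones2020}), but your argument --- Capelli-style descent plus the norm computation $N_{K(\beta)/K}(\beta-f(\gamma))=(-1)^{D}h_{n-1}(f(\gamma))=(-1)^{D}g(f^{n}(\gamma))$ with the parity of $D=d\cdot 2^{n-1}$ explaining the sign $(-1)^d$ at the first iterate only --- is exactly the technique the paper uses for its analogous Lemmas \ref{4} and \ref{7}, and your sign and degree bookkeeping checks out.
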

We will use the following result for $d=3$ in the proof of Theorem \ref{main3} which deals with the irreducible factors of the iterates of $z^3+\frac{1}{c}$ when it is reducible.  
	\begin{lemma}[Capelli's Lemma, \cite{jones2008}]\label{Capelli}
		Suppose $f(x), g(x)$ are polynomials over the field $K$ such that $g(x)$ is irreducible. Then $g(f(x))$ is irreducible over $K$ if and only if $f(x)-\beta$ is irreducible over $K(\beta)$ for every root $\beta\in{\overline{K}}$ of $g(x)$.
	\end{lemma}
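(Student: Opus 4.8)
The plan is to prove both implications by comparing degrees in the tower $K\subseteq K(\beta)\subseteq K(\alpha)$, where $\beta\in\overline K$ is a root of $g$ and $\alpha\in\overline K$ is a root of $f(x)-\beta$. First I would normalize: replacing $f$ and $g$ by suitable nonzero scalar multiples changes neither irreducibility nor the hypotheses, so I may assume $f$ and $g$ are monic. Write $m=\deg g\geq 1$ and $n=\deg f\geq 1$ (if $f$ is constant the statement is degenerate). Then $g(f(x))$ is monic of degree $mn$, and factoring $g(y)=\prod_{i=1}^m(y-\beta_i)$ over $\overline K$ gives $g(f(x))=\prod_{i=1}^m\bigl(f(x)-\beta_i\bigr)$. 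Fix one root $\beta=\beta_1$ of $g$; since $g$ is irreducible and monic it is the minimal polynomial of $\beta$ over $K$, so $[K(\beta):K]=m$.

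For the direction ``$g(f(x))$ irreducible over $K$ $\Rightarrow$ $f(x)-\beta$ irreducible over $K(\beta)$ for every root $\beta$ of $g$'': choose a root $\alpha$ of $f(x)-\beta$, so $f(\alpha)=\beta$ and hence $g(f(\alpha))=g(\beta)=0$. Since $g(f(x))$ is monic irreducible of degree $mn$ with $\alpha$ as a root, it is the minimal polynomial of $\alpha$ over $K$, so $[K(\alpha):K]=mn$. From $[K(\beta):K]=m$ and multiplicativity of degrees, $[K(\alpha):K(\beta)]=n$. But $\alpha$ is a root of $f(x)-\beta\in K(\beta)[x]$, which has degree $n$, so its minimal polynomial over $K(\beta)$ has degree $\leq n$; equality forces $f(x)-\beta$ (monic) to be that minimal polynomial, hence irreducible over $K(\beta)$. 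To pass from this one $\beta$ to an arbitrary root $\beta'$ of $g$, I would use the $K$-isomorphism $K(\beta)\cong K[y]/(g(y))\cong K(\beta')$ sending $\beta\mapsto\beta'$: it fixes the coefficients of $f$, hence carries $f(x)-\beta$ to $f(x)-\beta'$ and preserves irreducibility.

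For the converse, suppose $f(x)-\beta$ is irreducible over $K(\beta)$ (I only need this for one root $\beta$). Pick a root $\alpha$ of $f(x)-\beta$; then $f(x)-\beta$ is the minimal polynomial of $\alpha$ over $K(\beta)$, so $[K(\alpha):K(\beta)]=n$ and therefore $[K(\alpha):K]=[K(\alpha):K(\beta)]\,[K(\beta):K]=mn$. Since $g(f(\alpha))=0$, the minimal polynomial $\mu$ of $\alpha$ over $K$ divides $g(f(x))$; but $\deg\mu=mn=\deg g(f(x))$, and both are monic, so $g(f(x))=\mu$, which is irreducible over $K$.

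Finally I would remark that the argument uses only multiplicativity of field degrees and the elementary theory of minimal polynomials, so it is valid over an arbitrary field $K$ with no separability hypothesis needed; there is no genuine obstacle here, only the bookkeeping of reducing to monic polynomials and the (easy) point that passing between two roots $\beta,\beta'$ of $g$ in the first implication is handled by the isomorphism $K(\beta)\cong K(\beta')$ rather than by any Galois-theoretic machinery.
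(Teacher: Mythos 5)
The paper does not prove this lemma at all: it is quoted as Capelli's Lemma with a citation to \cite{jones2008}, so there is no in-paper argument to compare against. Your degree-counting proof is the standard one and is essentially correct: in the tower $K\subseteq K(\beta)\subseteq K(\alpha)$ (with $\beta=f(\alpha)\in K(\alpha)$) you correctly use multiplicativity of degrees and the characterization of minimal polynomials in both directions, the transfer between conjugate roots via the $K$-isomorphism $K(\beta)\cong K(\beta')$ is legitimate, and you are right that no separability is needed.

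The one step I would not let stand as written is the opening normalization. Replacing $g$ by a scalar multiple is harmless, but replacing $f$ by a nonzero scalar multiple $\lambda f$ turns $g(f(x))$ into $g(\lambda f(x))$, which is \emph{not} a scalar multiple of $g(f(x))$, and it also shifts the relevant constants from the roots of $g(y)$ to the roots of $g(\lambda y)$; so the claim that this ``changes neither irreducibility nor the hypotheses'' is not justified as stated. Fortunately you never actually need $f$ monic: with $g$ monic you still have $g(f(x))=\prod_i\bigl(f(x)-\beta_i\bigr)$, the polynomial $f(x)-\beta$ still has degree $n=\deg f$ over $K(\beta)$, and every appeal to ``monic of the right degree, hence equal to the minimal polynomial'' goes through after dividing by the (unit) leading coefficient. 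I would simply delete the normalization of $f$ and phrase those steps in terms of the monic normalizations of $f(x)-\beta$ and of $g(f(x))$; with that adjustment the proof is complete.
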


	\begin{prop}\cite{danielson2002}\label{3}
	    Let $n\geq{3}$ be an odd integer and let $f(x) = x^n-b\in{\Q[x]}$. For each $m\geq{1}$, let $S(n,m)=\{b\in{\Q}: f^{m}\, \text{is irreducible but} \,f^{m+1}\, \text{is reducible}\, \text{over}\,\,{\Q}\}.$ Then $S(n)=\cup_{m=1}^{\infty}S(n,m)$ is finite and is empty if $3\nmid n$. 
	    \end{prop}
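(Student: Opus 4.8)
The plan is to reduce, via Capelli's Lemma, to a statement about perfect powers in the critical orbit of $f$, and then to control the resulting Diophantine equations. \textbf{First}, recall the classical criterion (a consequence of Lemma \ref{Capelli}): for odd $n$, the polynomial $x^n-a$ is reducible over a field $L$ exactly when $a\in L^p$ for some prime $p\mid n$. Suppose $b\in S(n,m)$, so $f,f^2,\dots,f^m$ are irreducible while $f^{m+1}$ is reducible. Let $\alpha$ be a root of $f^m$ and $K_m=\Q(\alpha)$, of degree $n^m$ over $\Q$. Writing $f^{m+1}=f^m\circ f$ and applying Lemma \ref{Capelli} with the irreducible polynomial $g=f^m$, reducibility of $f^{m+1}$ over $\Q$ is equivalent to reducibility of $f(x)-\alpha=x^n-(b+\alpha)$ over $K_m$, i.e.\ to $b+\alpha=\gamma^p$ for some $\gamma\in K_m$ and some prime $p\mid n$. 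Taking norms and using that $f^m$ is monic of odd degree $n^m$,
\[
-f^m(-b)=N_{K_m/\Q}(b+\alpha)=N_{K_m/\Q}(\gamma)^{p}
\]
is a perfect $p$-th power in $\Q$; moreover $f^m(-b)=f^{m+1}(0)$, so this quantity is (a shift of) the orbit of the critical point $0$.

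\textbf{Second}, I would exploit rigid divisibility. Since $f$ is irreducible, $0$ is not periodic, and --- outside a finite exceptional set of $b$ handled directly --- $0$ is wandering, so by \cite{hamblen} the sequence $\bigl(f^{k}(0)\bigr)_{k\ge1}$ is a rigid divisibility sequence. Hence $f^{N}(0)=\pm\prod_{d\mid N}\Phi_d$ as a product of pairwise coprime ``primitive parts'' (where $\Phi_d$ collects the primes first entering the sequence at index $d$, with their multiplicities in $f^{d}(0)$), and for $f^{N}(0)$ to be a $p$-th power every $\Phi_d$ with $d\mid N$ must itself be one. Taking $N=m+1$ and writing $b=s/t$ in lowest terms, this forces, after clearing denominators and separating coprime factors, a family of Fermat--Catalan / superelliptic equations in $s$ and $t$ (the part $d=1$ makes $s$ a $p$-th power, $d=2$ gives essentially $v^{p}-w^{p}=z^{n-1}$ with $\gcd(v,w)=1$, and so on). A primitive prime divisor property for $\bigl(f^{k}(0)\bigr)$ --- for $k$ past an absolute bound, $f^k(0)$ has a primitive prime occurring to the first power --- makes $\Phi_d$ fail to be a $p$-th power for large $d$, hence bounds $m$; and for each of the finitely many remaining $m$ the equations above have only finitely many solutions (by Faltings, or Darmon--Granville, or elementary estimates in the smallest cases). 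Therefore $S(n)=\bigcup_{m\ge1}S(n,m)$ is finite.

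\textbf{Finally}, when $3\nmid n$ every prime $p\mid n$ satisfies $p\ge5$, and the equations produced above --- such as $v^{p}-w^{p}=z^{n-1}$ with coprime $v,w$ and all exponents $\ge5$ --- lie in ranges where the nonexistence of nontrivial coprime solutions is known, via modularity-type results on equations $x^{p}\pm y^{p}=Cz^{q}$ (in the spirit of Darmon--Merel, Bennett--Skinner, and related work). This forces $st=0$, i.e.\ $b=0$, which is excluded, so $S(n)=\emptyset$. This Diophantine step is where I expect the real difficulty, and it is exactly where the hypothesis $3\nmid n$ is essential: it keeps the relevant exponents $\ge5$ and the associated curves of genus $\ge2$, whereas for $3\mid n$ the exponent $3$ reappears and the curves can degenerate (e.g.\ to Mordell curves such as $v^{2}=w^{3}+27$), so that only finiteness survives there. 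Steps one and two are otherwise essentially formal given the cited results.
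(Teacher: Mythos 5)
First, a point of comparison: the paper does not actually prove Proposition \ref{3} --- it is quoted from \cite{danielson2002}, and the only indication of its proof is the remark that it ``was proved by using Lemma \ref{Capelli} and the non-existence of primitive solutions of the generalized Fermat equation.'' Your first step is correct and coincides with that standard reduction (it is the same computation as in the paper's Lemmas \ref{4} and \ref{7}): since $n$ is odd, Capelli reduces reducibility of $f^{m+1}$ to $b+\alpha\in K_m^p$ for some prime $p\mid n$, and the norm identity $N_{K_m/\Q}(b+\alpha)=-f^m(-b)=-f^{m+1}(0)$ shows that $b\in S(n,m)$ forces $-f^{m+1}(0)$ to be a $p$-th power in $\Q$.

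The gap lies in your second and third steps. (i) Your mechanism for bounding $m$ is a primitive-prime-divisor-to-the-first-power statement for the orbit $(f^k(0))$ valid ``past an absolute bound,'' i.e.\ uniformly in $b$. No such unconditional uniform result exists (the known primitive-prime-divisor theorems for these orbits are either conditional on $abc$ or carry exceptional sets depending on $b$), and uniformity is essential here: $S(n)$ is a union over $m$ of sets of $b$'s, so ``for each fixed $b$ only finitely many $m$ are bad'' gives no information about finiteness of $S(n)\subseteq\Q$. (ii) The rigid-divisibility factorization $f^{m+1}(0)=\pm\prod_{d\mid m+1}\Phi_d$ buys less than you expect: when $m+1$ is prime the entire difficulty is concentrated in the single new part $\Phi_{m+1}$, which is as hard to control as $f^{m+1}(0)$ itself, and the conditions on the small parts (e.g.\ that $\Phi_2=s^{n-1}+t^{n-1}$ be a $p$-th power, with $n-1$ even) are not of the clean signatures you list --- in particular your displayed equation $v^p-w^p=z^{n-1}$ is not what the $d=2$ part produces. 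The route the paper points to, and carries out itself in Section 4 for the analogous statements, avoids both issues: one uses the recursion $f^{m+1}(0)=(f^m(0))^n-b$ directly, clears denominators with $b=s/t$ in lowest terms, and extracts from the coprimality of numerator and denominator a single three-term identity among pairwise coprime integers, i.e.\ one generalized Fermat equation for each pair $(m,p)$ with explicit exponents growing with $m$. When $3\nmid n$ all resulting signatures fall under known non-existence results of the type collected in Lemma \ref{b}, giving $S(n)=\emptyset$; when $3\mid n$ only small $m$ survive and the equation degenerates to finitely many Mordell-type curves with finitely many integral points, giving finiteness. So your step one is sound, but as written steps two and three would not close the argument.
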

	    The above result was proved by using Lemma \ref{Capelli} and the non-existence of primitive solutions of the  
	    generalized Fermat equation. Let $p, q, r$ be integers $\geq 2$ and consider the  generalized Fermat equation
	\begin{equation} \label{eqn}
		x^p+y^q={z^r}.
	\end{equation}
	Given a triple $(a,b,c)\in{\mathbb{Z}}^3$, we say that $(a,b,c)$ is a solution of equation $(2.1)$ if $a^p+b^q=c^r$. If 
	$a,b,c$ are pairwise coprime then this solution is called ${\it proper}$. A proper solution $(a, b, c) $ is \emph{primitive}  if $abc\neq 0$.  A well-known  conjecture regarding the generalized Fermat equations is due to Tijdeman and Zagier (see \cite{laishram2011}), also known as Beal's conjecture.  
	
	\begin{conj}\label{Beal}
There are no primitive solutions of the diophantine equation $$x^p+y^q=z^r$$ in ${\mathbb{Z}}$  for $p, q, r\geq 3.$  
\end{conj}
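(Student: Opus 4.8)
The plan --- necessarily a conditional one, since Conjecture \ref{Beal} is a famous open problem --- is to deduce Beal's conjecture from the explicit $abc$-conjecture of Baker (Conjecture \ref{exp-abc}), exactly in the spirit of Theorem \ref{main2}. Suppose $(a,b,c)$ is a primitive solution of $x^p+y^q=z^r$ with $p,q,r\geq 3$. Since $p'\mid p$ gives $a^p=(a^{p/p'})^{p'}$, after replacing each of $p,q,r$ by a prime divisor (or by $4$ when the exponent is a power of $2$) we may assume every exponent is an odd prime or equals $4$; then $\tfrac1p+\tfrac1q+\tfrac1r\leq 1$, with equality only for the signature $(3,3,3)$, which is ruled out by Fermat's Last Theorem. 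Because $a,b,c$ are pairwise coprime, $\mathrm{rad}(a^p b^q c^r)=\mathrm{rad}(abc)\leq abc$, while the equation forces $a\ll c^{\,r/p}$ and $b\ll c^{\,r/q}$; substituting these into the explicit $abc$-inequality for the triple $(a^p,b^q,c^r)$ yields an explicit inequality of the shape $r\bigl(1-\tfrac1p-\tfrac1q\bigr)\leq \kappa$ with $\kappa$ absolute, hence explicit upper bounds on $p$, $q$ and $r$. For each of the finitely many surviving signatures a second application of the explicit $abc$-bound bounds $\max(a^p,b^q,c^r)$ explicitly, and Conjecture \ref{Beal} would then follow from a finite verification.

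I expect that finite verification to be the true difficulty: the bounds produced even by an explicit $abc$-conjecture are far too large to enumerate, so for each signature surviving the first step one would instead invoke the modular method. Concretely, to a putative solution $(a,b,c)$ of signature $(p,q,r)$ one attaches a Frey--Hellegouarch elliptic curve $E/\mathbb{Q}$ whose minimal discriminant and conductor are supported on the primes dividing $abc$; modularity of $E$ together with Ribet-type level lowering then forces the mod-$\ell$ Galois representation $\bar{\rho}_{E,\ell}$, for the prime exponent $\ell$, to arise from a weight-$2$ newform of very small level, and one derives a contradiction by comparing traces of Frobenius at an auxiliary prime. This is precisely how Fermat's Last Theorem (the signature $(\ell,\ell,\ell)$) and the special signatures of the generalized Fermat equation that are currently resolved are proved.

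The hard part, and the reason Beal's conjecture remains open unconditionally, is that no Frey curve --- and no completed Frey abelian-variety construction --- is known for a general signature $(p,q,r)$ with three distinct large prime exponents; the Darmon program designed to cover such triples is not finished, and the only unconditional general input, the Darmon--Granville theorem (via Faltings), gives finiteness of primitive solutions for each fixed signature but not their non-existence. A complete unconditional proof therefore seems out of reach with present techniques, so the honest outcome is the conditional statement above: the explicit $abc$-conjecture implies Conjecture \ref{Beal}, just as by Theorem \ref{main2} it implies Conjecture \ref{stable}.
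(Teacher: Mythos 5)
This statement is Beal's (Tijdeman--Zagier's) conjecture, and the paper does not prove it: it is recorded explicitly as an open conjecture, with the sentence ``This is open'' immediately following, and the paper only quotes partial unconditional results (Lemma \ref{b}) and a conditional result under the explicit $abc$-conjecture (Proposition \ref{abc-Beal}, from Laishram--Shorey) which still carries an exceptional set of signatures. So there is no ``paper's own proof'' to match, and your proposal, which you candidly frame as conditional and incomplete, cannot be accepted as a proof of the statement.

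Beyond that, the conditional part of your sketch has a concrete gap. From $a\leq c^{r/p}$, $b\leq c^{r/q}$ and $c^r<N^{7/4}$ with $N\leq abc$, one gets $\frac{1}{p}+\frac{1}{q}+\frac{1}{r}>\frac{4}{7}$, not ``explicit upper bounds on $p$, $q$ and $r$'': whenever $\frac{1}{p}+\frac{1}{q}\geq\frac{4}{7}$ (e.g.\ the signatures $(3,3,r)$ and $(3,4,r)$) the inequality holds for \emph{every} $r$, so infinitely many signatures survive the first application of explicit $abc$, and the sharper bound $c^r<N^{1+7/12}$ for large $N$ does not remove them either. This is precisely why Proposition \ref{abc-Beal} must exclude the families $[3,4,\ell]$ and $[3,5,\ell]$ for small $\ell$, and why even granting explicit $abc$ the full conjecture does not follow from the argument you outline; the modular-method step you defer to for the surviving signatures is exactly the part that is not known for general triples. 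Your final paragraph acknowledges all of this, which is the correct assessment, but it also means the proposal establishes strictly less than the statement and strictly less than what the paper already cites conditionally.
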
	
This is open. However there are a number of partial results on this conjecture. For the proof of our theorems, we need the following result on the non-existence of primitive solutions of the following equations, see \cite{bennett2015}.
\begin{lemma}\label{b}
There are no primitive solutions for the equation $x^p+y^q=z^r$ when 
	$$(p, q, r)\in \{(2, 3, 7), (2, 3, 10), (2, n, 4), (2, n, 6), (3, 3, 2n), (3, n, 6), (n, n, 2), (n, n, 3)\}$$
	where $n\geq 2$ and further $n\geq 3$ when $(p, q, r)\in\{(3, n, 6),(n,n,3)\}$; $n\geq{4}$ when 
	$(p,q,r)\in\{(2,n,6),(n,n,2)\}$ and $n\geq{6}$ when $(p,q,r)=(2,n,4)$.
\end{lemma}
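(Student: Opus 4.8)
\begin{pff}
The plan is to verify the lemma signature by signature, invoking for each $(p,q,r)$ the resolution of the corresponding generalized Fermat equation that is available in the literature and collected, with precise references, in \cite{bennett2015}; there is no uniform argument that covers all eight cases at once. Two of the signatures are isolated, while the six families indexed by $n$ are treated by the \emph{modular method}: to a putative primitive solution one attaches a Frey elliptic curve over $\Q$, invokes its modularity and the irreducibility of the associated mod-$n$ Galois representation, applies Ribet's level-lowering to arrive at a space of weight-$2$ cusp forms of small level, and then either observes that this space vanishes or eliminates each candidate newform by a congruence or local argument at an auxiliary prime; the finitely many small exponents are disposed of separately.

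Concretely, $(n,n,2)$ for $n\geq 4$ and $(n,n,3)$ for $n\geq 3$ are exactly the theorems of Darmon--Merel, obtained from the Frey curves attached to $a^n+b^n=c^2$ and $a^n+b^n=c^3$, whose conductors are supported at $2$ and $3$ respectively; the small exponents here are classical. The signature $(2,3,7)$ is the theorem of Poonen--Schaefer--Stoll: a primitive solution produces a rational point on one of a finite list of genus-$3$ twists of the modular curve $X(7)$, and all rational points on these curves are determined by explicit descent combined with a Mordell--Weil sieve. The signature $(2,3,10)$ reduces, on writing $z^{10}=(z^2)^5$, to the equation $x^2+y^3=u^5$ of signature $(2,3,5)$, whose primitive solutions form a known finite list in which $u$ is never a perfect square. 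Finally, $(3,3,2n)$, $(2,n,4)$, $(2,n,6)$ and $(3,n,6)$ follow from the identities $z^{2n}=(z^n)^2$, $z^4=(z^2)^2$, $z^6=(z^3)^2$ and $z^6=(z^2)^3$ together with an analysis of the factorization of the resulting equation, in which the relevant $\gcd$ lies in $\{1,2,3\}$; this reduces each case to one already settled above (an $(n,n,2)$- or $(3,3,n)$-type equation, up to a controlled power of $2$ or $3$), with the finitely many small exponents $n\in\{2,3,4,5\}$ handled directly on the associated elliptic curves, and the exponent restrictions stated in the lemma are precisely what these reductions require.

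The main obstacle is that the lemma is genuinely a patchwork, so the difficulty is concentrated in its hardest single ingredient, namely the signature $(2,3,7)$: unlike the $n$-indexed families it admits neither a parametric description nor a one-step modular argument, and its resolution rests on a delicate and explicit study of rational points on the genus-$3$ twists of $X(7)$. A secondary subtlety, pervasive in the reductions of the previous paragraph, is to check that each factorization-based descent carries a primitive solution to a primitive solution of the lower-signature equation; keeping track of the cases $\gcd\in\{1,2,3\}$ is exactly what forces the hypotheses on $n$.
\end{pff}
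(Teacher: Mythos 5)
The paper offers no proof of this lemma at all: it is a compendium of known resolutions of generalized Fermat equations, quoted directly from \cite{bennett2015}, so the only thing to assess is whether your survey of the literature is accurate. Much of it is --- $(n,n,2)$ for $n\geq 4$ and $(n,n,3)$ for $n\geq 3$ are indeed Darmon--Merel, and the general modular-method outline is correct --- but three of your case analyses contain genuine errors. First, Poonen--Schaefer--Stoll do \emph{not} prove that $x^2+y^3=z^7$ has no primitive solutions; their theorem is a complete determination of the primitive solutions, and under the paper's own definition of primitive (pairwise coprime with $abc\neq 0$) the list is nonempty: it contains $(\pm 3,-2,1)$, $(\pm 71,-17,2)$ and three large sporadic triples such as $2213459^2+1414^3=65^7$. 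As literally stated the lemma is therefore false for $(2,3,7)$ (a misquotation of \cite{bennett2015} by the authors), and a proof attempt should have detected this rather than asserted the opposite. Second, your reduction of $(2,3,10)$ to $(2,3,5)$ breaks at exactly the analogous point: in the known list of primitive solutions of $x^2+y^3=u^5$ the value $u=1$ occurs (from $3^2+(-2)^3=1$), and $1$ is a perfect square, so $(\pm 3,-2,\pm 1)$ is a primitive solution of $x^2+y^3=z^{10}$; the claim that ``$u$ is never a perfect square'' is simply wrong.

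Third, the signature $(3,3,2n)$ cannot be dispatched by writing $z^{2n}=(z^n)^2$ and appealing to a result ``already settled above'': the equation $x^3+y^3=w^2$ has infinitely many primitive solutions (already $1^3+2^3=3^2$), and the alternative reading $x^3+y^3=(z^2)^n$ is a $(3,3,n)$ equation, which is not resolved for all $n$ and is not on your list. The actual treatment of $(3,3,2n)$ in \cite{bennett2015} starts from the parametrized families of primitive solutions to $x^3+y^3=w^2$ and then runs a fresh Frey-curve/level-lowering argument at the exponent $n$ on each family; it is a new theorem of that paper, not a formal reduction. (Your factorization reductions are sound for $(2,n,4)$ via $a^n+b^n=2c^2$ and plausibly for $(2,n,6)$, but they do not cover $(3,n,6)$ or $(3,3,2n)$.) In short: the architecture of your proposal --- case-by-case citation plus modular method --- matches what a correct proof would look like, but the $(2,3,7)$ and $(2,3,10)$ cases as you state them are false, and the $(3,3,2n)$ reduction does not exist. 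Since the paper later applies the $(2,3,7)$ case to rule out $z^2+(-y)^3=x^7$ in the proof of Theorem \ref{main1}, the existence of the sporadic solutions is not a cosmetic issue.
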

	The next result is a Catalan's conjecture, now a theorem of Mih$\breve{a}$ilescu \cite{mihai}. 
	\begin{lemma}\label{a}
	The only solution of $x^m-y^n=1$ in integers $x, y, m>1, n>1$ with $xy\neq 0$ is given $3^2-2^3=1$.   
		\end{lemma}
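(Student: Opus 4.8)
The plan is to recognise this as Catalan's conjecture and to prove it by cases on the exponents. First I would reduce to prime exponents: if $x^m-y^n=1$ with $m,n>1$ and $xy\neq 0$, then choosing primes $p\mid m$, $q\mid n$ and setting $X=x^{m/p}$, $Y=y^{n/q}$ gives $X^p-Y^q=1$ with $XY\neq 0$, so it suffices to analyse $X^p-Y^q=1$ for primes $p,q$, and the claimed solution must descend from one of these equations.

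Next I would dispose of the cases in which $2$ occurs. If $p=q=2$, then $(X-Y)(X+Y)=1$ forces $Y=0$, which is excluded. If $q=2$ and $p$ is odd, then $X^p=Y^2+1$; a congruence mod $4$ shows $X$ is odd, hence $Y$ is even and $Y+i$, $Y-i$ are coprime in $\mathbb{Z}[i]$, so each is a unit times a $p$-th power, and since $p$ is odd the units are themselves $p$-th powers; writing $Y+i=(a+bi)^p$ and comparing imaginary parts gives $b\mid 1$ and then a short contradiction (excluding $Y=0$) --- this is Lebesgue's theorem, and it leaves no solution. If $p=2$ and $q$ is odd, then $Y^q=(X-1)(X+1)$; separating the case $X$ even (two coprime $q$-th powers differing by $2$) from $X$ odd (where $\gcd(X-1,X+1)=2$ and a $2$-adic descent produces a new equation of the same shape), the classical results of Nagell and Ko Chao show that the only solution with $Y\neq 0$ is $3^2-2^3=1$, the exponent $q=3$ being elementary and $q\geq 5$ being Ko Chao's theorem.

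The remaining case, $p$ and $q$ both odd primes, is where I would invoke Mih\u{a}ilescu's theorem in full rather than reprove it, citing \cite{mihai}; its architecture is Cassels' relations (forcing $q\mid X$, $p\mid Y$, and, up to signs, $X-1=p^{q-1}a^q$, $Y+1=q^{p-1}b^p$), then a descent in the cyclotomic field $\mathbb{Q}(\zeta_p)$ in which $(X-\zeta_p)/(1-\zeta_p)$ becomes a $q$-th power in $\mathbb{Z}[\zeta_p]$ up to controlled factors, then Mih\u{a}ilescu's double-Wieferich step $p^{q-1}\equiv 1\pmod{q^2}$ and $q^{p-1}\equiv 1\pmod{p^2}$, and finally a contradiction drawn from Stickelberger's ideal together with Thaine's theorem on the plus part of the class group, forcing an impossible relation between $p$ and $q$. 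This last case is precisely the main obstacle: everything through Cassels' relations is classical, but eliminating all pairs of odd primes resisted attack for decades and genuinely needs the cyclotomic machinery, whereas the cases $q=2$ or $p=2$ are routine, requiring only the $2$-adic bookkeeping in $Y^q=(X-1)(X+1)$ and the verification that $3^2-2^3=1$ is the unique survivor, in agreement with the statement.
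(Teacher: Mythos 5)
This lemma is Catalan's conjecture, and the paper offers no proof of it at all---it simply quotes the result as Mih\u{a}ilescu's theorem with a citation to \cite{mihai}. Your sketch is the standard and correct decomposition (reduction to prime exponents, Lebesgue's theorem for $X^p=Y^2+1$, Nagell and Ko Chao for $X^2-1=Y^q$, and Mih\u{a}ilescu's cyclotomic argument for two odd primes), and since you too ultimately defer the genuinely hard case to \cite{mihai}, your argument is in substance the same as the paper's, just with the classical bookkeeping spelled out.
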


We end with this section with the explicit abc-conjecture due to Baker \cite{baker2004}. If $m$ is a positive integer, then its {\it radical} $N(m)$ is the product of 
distinct prime divisors of $m$ and $\omega(m)$ denotes the number of distinct primes dividing  $m$.  
\begin{conj}\label{exp-abc}[Explicit abc-conjecture]
Let $a,b$ and $c$ be pairwise coprime integers satisfying $a+b=c.$ Then $$c<\frac{6}{5}N\frac{(\log N)^\omega}{\omega!}$$ 
where $N=N(abc)$ and $\omega=\omega(N).$ 
\end{conj}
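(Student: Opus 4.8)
The statement to be established is the explicit $abc$-conjecture of Baker, which is an open refinement of the $abc$-conjecture, so I must be upfront that no unconditional proof is available: even the unrefined $abc$-conjecture (that $c \ll_\epsilon N^{1+\epsilon}$ for coprime $a+b=c$) remains one of the central open problems in number theory. Accordingly, what I can offer is not a genuine proof but the heuristic route by which the explicit inequality $c < \frac{6}{5} N \frac{(\log N)^\omega}{\omega!}$ is arrived at, together with an account of where the real obstruction to an unconditional argument lies.

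The plan for the heuristic derivation is to model an $abc$-triple probabilistically and then count. Fix the radical $N = N(abc)$ and write $N = p_1 \cdots p_\omega$ as a product of $\omega$ distinct primes. Any triple with this radical has $a, b, c$ built from the $p_i$ raised to positive exponents, and for a given exponent vector the size of $c$ can be as large as roughly $N$ times a product of the $p_i^{e_i - 1}$. The number of ways to distribute the multiplicative mass across the $\omega$ primes, weighted by the additive constraint $a + b = c$, is governed by a multinomial-type count; summing these contributions and extracting the dominant term produces the factor $\frac{(\log N)^\omega}{\omega!}$. Here the $\log N$ reflects the logarithmic range available to each prime's exponent, the $\omega!$ is the symmetry factor for the $\omega$ interchangeable primes, and the constant $\frac{6}{5}$ emerges from a refined analysis of the implied density, which Baker fixes by testing against the known extremal triples (those with $c$ unusually large relative to $N$).

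I would then isolate the main obstacle, which is the step that no current technique can supply: the heuristic treats the additive relation $a + b = c$ as though it were independent of the multiplicative structure recorded by $N$, whereas turning this into a theorem requires controlling the joint distribution of additive and multiplicative data, precisely the interaction that makes the $abc$-conjecture intractable. Unconditional results, whether via linear forms in logarithms (as in Baker's own work) or via modularity methods for special exponent triples of the kind invoked in Lemma \ref{b}, give bounds only in very restricted ranges and fall far short of the uniform inequality above. The realistic status of the proposal is therefore that the explicit inequality is a sharpened prediction consistent with all known data and with the probabilistic model, but that a proof of it would constitute a solution of the $abc$-conjecture and lies beyond existing methods. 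This is exactly why the present paper assumes it as a hypothesis in Theorem \ref{main2} rather than attempting to prove it.
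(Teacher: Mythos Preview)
Your assessment is correct: the statement is labeled in the paper as a \emph{Conjecture} (the explicit $abc$-conjecture of Baker), and the paper offers no proof of it whatsoever---it is simply stated and then assumed as a hypothesis in Proposition~\ref{abc-LS}, Proposition~\ref{abc-Beal}, and Theorem~\ref{main2}. Your recognition that no unconditional proof exists, and that attempting one would amount to resolving the $abc$-conjecture itself, matches the paper's treatment exactly; the heuristic discussion you supply goes beyond what the paper provides, but there is no discrepancy to report.
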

An easily applicable formulation was given by Laishram and Shorey \cite[Theorem 1]{laishram2011}. The next result is contained in  \cite[Theorem 1]{laishram2011}. 
\begin{prop}\label{abc-LS}
Assume Conjecture \ref{exp-abc}. Let $a,b$ and $c$ be pairwise coprime integers such that $a+b=c.$ Then  
$$c<N^{1+\frac{3}{4}} \qquad {\rm where} \quad N=N(abc).$$ 
Further for $0<\epsilon\leq{\frac{3}{4}}$, there exist $N_{\epsilon}$, depending only on $\epsilon$, such that whenever  $N\geq{N_{\epsilon}}$, we have 
$c<N^{1+\epsilon}$.  In particular, for $\epsilon=\frac{7}{12}$, $N_{\epsilon}=\exp(204.75)$. 
\end{prop}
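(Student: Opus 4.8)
The plan is to feed the explicit $abc$-inequality of Conjecture \ref{exp-abc} into an elementary optimization in the variable $\omega=\omega(N)$. Since Conjecture \ref{exp-abc} gives $c<\tfrac{6}{5}N\,\dfrac{(\log N)^{\omega}}{\omega!}$ with $N=N(abc)$ and $\omega=\omega(N)$, it suffices to show that
\[
\frac{6}{5}\cdot\frac{(\log N)^{\omega}}{\omega!}\le N^{\epsilon}
\]
under the relevant hypotheses; indeed then $c<N^{1+\epsilon}$, and the first assertion is the case $\epsilon=\tfrac34$. So the entire statement reduces to a single numerical inequality, and the work is to locate the threshold in $N$ beyond which it holds.

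First I would exploit the only arithmetic input available: $N=N(abc)$ is squarefree with exactly $\omega$ prime divisors, so $N\ge N_{\omega}:=\prod_{i=1}^{\omega}p_i$, the $\omega$-th primorial. Inverting this, $\omega=\omega(N)$ is bounded above by $k(N):=\max\{k:N_k\le N\}$, and since $\log N_{k}=\theta(p_{k})$ with $\theta(p_k)\sim p_k\sim k\log k$, explicit Chebyshev/Rosser--Schoenfeld estimates yield a clean bound of the shape $\omega\le k(N)\le \dfrac{\log N}{\log\log N-c_0}$ for an explicit constant $c_0$ and $N$ past an explicit point. In particular $\omega<\log N$, so $\omega$ lies in the increasing range of the unimodal sequence $k\mapsto (\log N)^{k}/k!$ (whose consecutive ratio is $(\log N)/(k+1)$), whence $(\log N)^{\omega}/\omega!\le (\log N)^{m}/m!$ with $m=\lfloor k(N)\rfloor$.

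Next I would estimate $(\log N)^{m}/m!$ by Stirling, $m!>\sqrt{2\pi m}\,(m/e)^{m}$, to get $(\log N)^{m}/m!\le\big(e\log N/m\big)^{m}$. Substituting $m\approx \log N/\log\log N$ turns the right-hand side into roughly $(e\log\log N)^{\log N/\log\log N}=N^{(1+\log\log\log N)/\log\log N}$, whose exponent tends to $0$ as $N\to\infty$. Hence for every $\epsilon\in(0,\tfrac34]$ there is an explicit $N_{\epsilon}$ such that this exponent (with the $\tfrac65$ and the $\sqrt{2\pi m}$ correction absorbed, since the latter only helps) is at most $\epsilon$ once $N\ge N_{\epsilon}$, which is exactly the claimed inequality; carrying the Rosser--Schoenfeld constants through the chain for $\epsilon=\tfrac{7}{12}$ gives the stated value $N_{\epsilon}=\exp(204.75)$. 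For $\epsilon=\tfrac34$ one must additionally handle the finitely many $N$ below the threshold: for each admissible small value of $\omega$ the quantity $\tfrac65(\log N)^{\omega}/(\omega!\,N^{3/4})$ is decreasing in $N$ past a point and is already $<1$ at the relevant small $N_{\omega}$, so a short direct (or computational) check disposes of that range and yields the unconditional-in-$N$ bound $c<N^{7/4}$.

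The main obstacle is not conceptual but bookkeeping: one must choose the prime estimates for $p_k$ and $\theta$ and the form of Stirling sharply enough that $N_{\epsilon}$ --- and in particular the residual finite range that has to be checked by hand or by machine for $\epsilon=\tfrac34$ --- comes out genuinely small, yet not so sharply that the monotonicity step (the passage $\omega\le k(N)<\log N$ and the unimodality cutoff) becomes delicate. Balancing these, and tracking the constants cleanly so that $\exp(204.75)$ actually falls out for $\epsilon=\tfrac{7}{12}$, is where the real care is needed; the analytic skeleton is otherwise routine.
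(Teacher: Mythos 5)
The paper offers no proof of this proposition at all: it is imported verbatim as \cite[Theorem 1]{laishram2011}, so there is nothing internal to compare your argument against. Your outline is, in substance, the argument of that reference --- reduce to the single inequality $\frac{6}{5}(\log N)^{\omega}/\omega!\le N^{\epsilon}$, bound $\omega=\omega(N)$ via the primorial $N\ge p_1\cdots p_\omega$ together with explicit estimates for $\theta(p_k)$, and control $(\log N)^{\omega}/\omega!$ by unimodality plus Stirling --- so the strategy is sound and the asymptotic part ($N^{(1+\log\log\log N)/\log\log N}$ with exponent tending to $0$) is right. Two caveats on the bookkeeping you defer: first, in the finite range for $\epsilon=\frac34$ the function $N\mapsto(\log N)^{\omega}/N^{3/4}$ attains its maximum at $\log N=\frac{4\omega}{3}$, which for $\omega\le 3$ lies to the \emph{right} of $N_\omega$, so checking the value at $N=N_\omega$ and invoking eventual monotonicity is not quite enough --- you must check the interior maximum (it does come out below $1$, but that step has to be done); second, the specific constant $\exp(204.75)$ for $\epsilon=\frac{7}{12}$ is asserted rather than derived, and that is precisely the nontrivial quantitative content of the cited theorem. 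As a reconstruction of the proof behind the citation your proposal is correct in outline, but those two numerical verifications are the actual substance and would need to be carried out.
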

Explicit $abc-$Conjecture can be used to give a general result on the Conjecture \ref{Beal}, see \cite[Theorem 3]{laishram2011}. 
\begin{prop}\label{abc-Beal}
Assume Conjecture \ref{exp-abc}. Then there are no primitive solutions of the equation $x^p+y^q=z^r$ with $p\geq 3, q\geq 3, r\geq 3$ and 
$$[p, q, r]\notin \{[3, 5, \ell]: 7\leq \ell\leq 23, \ell \ {\rm prime}\}\cup \{[3, 4, \ell]: \ell \ {\rm prime}\}$$
where $[p , q, r]$ denote all the permutations of ordered triples $(p ,q, r)$.  
\end{prop}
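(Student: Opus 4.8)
The plan is to insert a hypothetical primitive solution of $x^p+y^q=z^r$ into the explicit $abc$-bound of Proposition~\ref{abc-LS}, deduce a strong restriction on $(p,q,r)$, and then clear the finitely many surviving exponent patterns using the partial results collected in Lemmas~\ref{b} and~\ref{a} together with Fermat's Last Theorem, plus a descent argument for the stubborn families. Concretely, suppose $x^p+y^q=z^r$ has a primitive solution with $p,q,r\ge 3$. Then $x,y,z$ are pairwise coprime and nonzero, so $x^p,y^q,z^r$ are pairwise coprime; absorbing signs, write the identity as $a+b=c$ with $\{|a|,|b|,|c|\}=\{|x|^p,|y|^q,|z|^r\}$ and $c=\max$, and note $N:=N(abc)=N(xyz)\le|xyz|$. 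Since $|x|\le c^{1/p}$, $|y|\le c^{1/q}$ and $|z|\le c^{1/r}$, we have $N\le c^{1/p+1/q+1/r}$, so Proposition~\ref{abc-LS} gives $c<N^{1+\epsilon}\le c^{(1+\epsilon)(1/p+1/q+1/r)}$, whence
\[
\frac1p+\frac1q+\frac1r>\frac{1}{1+\epsilon};
\]
with $\epsilon=\tfrac34$ (valid for every $N$) this reads $\frac1p+\frac1q+\frac1r>\frac47$, and with $\epsilon=\tfrac7{12}$ (valid once $N\ge\exp(204.75)$) it reads $\frac1p+\frac1q+\frac1r>\frac{12}{19}$.

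Next I would combine this with the elementary reduction that if the largest exponent factors, $r=r_1r_2$ with $r_1>1$, then $x^p+y^q=(z^{r_2})^{r_1}$ is again a primitive solution with the smaller exponent $r_1$; hence it suffices to assume the exponents not forced to equal $3,4,5$ are prime. A short case count over $3\le p\le q\le r$ then shows that, apart from the three families $(3,3,\ell)$, $(3,4,\ell)$, $(3,5,\ell)$ with $\ell$ prime, only finitely many exponent patterns survive, and in the $(3,5,\ell)$ family the $\epsilon=\tfrac34$ bound already forces $\ell\le23$.

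It remains to dispose of the survivors. Patterns that reduce to settled equations are handled directly: $(n,n,n)$ by Fermat's Last Theorem, $(n,n,2)$, $(n,n,3)$ and $(3,3,2n)$ by Lemma~\ref{b}, and equations with a variable equal to $\pm1$ by Lemma~\ref{a}; mixed triples such as $(4,6,6)$, $(6,6,4)$, $(3,8,8)$, $(3,5,5)$ collapse onto these via identities like $z^6=(z^2)^3=(z^3)^2$ and $(3,5,5)=(5,5,3)$, and any remaining small triple not yet covered is run through a further descent of the type below. For the family $(3,3,\ell)$, $\ell$ prime --- which the crude bound does not touch --- I would use the factorization $x^3+y^3=(x+y)(x^2-xy+y^2)$ with $\gcd(x+y,x^2-xy+y^2)\mid3$ to write (up to the factor $3$) $x+y=a^\ell$, $x^2-xy+y^2=b^\ell$ with $\gcd(a,b)=1$, then the identity $4b^\ell=(x+y)^2+3(x-y)^2$ to produce the \emph{derived} pairwise-coprime triple $a^{2\ell}+3f^2=4b^\ell$ with $f=x-y$, and re-apply Proposition~\ref{abc-LS} to it; using $|f|^2\le4b^\ell$ and $b\le|a|^2$ (or $|a|$ bounded), which follow from $x+y=a^\ell$, this forces $\ell$ to be small, and one or two iterations of this descent together with the resolution of $x^3+y^3=z^\ell$ for the remaining small primes finish the family. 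The ``small-radical'' ranges, where the $\epsilon=\tfrac7{12}$ bound is not available, are handled by feeding the derived equations into the explicit form $c<\tfrac65N(\log N)^{\om}/\om!$ of Conjecture~\ref{exp-abc} directly, as in \cite{laishram2011}. What survives all of this is precisely $[3,5,\ell]$ for primes $7\le\ell\le23$ and $[3,4,\ell]$ for primes $\ell$, the claimed exceptions.

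The main obstacle is this descent step: the raw $abc$-inequality is far too weak on the families with two small exponents --- for $(3,3,\ell)$ it does not bound $\ell$ at all before the factorization, and only yields $\ell$ up to about $20$ afterward --- so the real work is to run the descent cleanly (verifying pairwise coprimality of every derived triple and the correct $\max$-estimates in all sign cases), to iterate it enough, and to line each leftover small triple up with a result in the literature. The family $(3,4,\ell)$ with $\ell$ prime is not reached by this method at all, which is exactly why it must appear as an exception.
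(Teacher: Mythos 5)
First, a point of reference: the paper does not prove this proposition at all --- it is imported verbatim from \cite[Theorem 3]{laishram2011}, so there is no in-paper proof to compare against. Your opening reduction is exactly the one used there and is correct: for a primitive solution rearranged as $a+b=c$ with $c$ the maximum, $N=N(xyz)\le |xyz|\le c^{1/p+1/q+1/r}$, and Proposition \ref{abc-LS} forces $\tfrac1p+\tfrac1q+\tfrac1r>\tfrac47$; together with the reduction to exponents that are prime or $4$, this correctly isolates the families $(3,3,\ell)$, $(3,4,\ell)$, $(3,5,\ell)$ with $\ell\le 23$, and a short finite list $(3,7,7)$, $(4,4,\ell)$ for $\ell\le 13$, $(4,5,5)$, $(4,5,7)$, $(5,5,5)$.

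There are, however, two genuine problems. The first is at the crux of the hardest case, $(3,3,\ell)$: your descent hinges on ``$b\le|a|^2$, which follows from $x+y=a^\ell$,'' and this is both false and the wrong direction. Since $4b^\ell=(x+y)^2+3(x-y)^2\ge (x+y)^2=a^{2\ell}$, the true inequality is $|a|^2\le 4^{1/\ell}b$ (take $x=2$, $y=-1$ to see your version fail); a lower bound on $|a|$ is useless for bounding $N\le 6|a||x-y||b|$ from above. With the corrected inequality the descent does yield $N\ll b^{\ell/2+3/2}$ and hence a bound on $\ell$, but as written your argument does not bound $\ell$ at all for this family. The second problem is that the verification that every non-excepted surviving signature actually dies is largely deferred to ``collapses'' and ``further descents of the type below.'' Several of them are not reachable from Lemma \ref{b} or Lemma \ref{a}: $(4,4,\ell)$ for $\ell\le 13$ needs the signature $(2,4,n)$ results, the small primes left over from the $(3,3,\ell)$ descent need the resolution of $x^3+y^3=z^\ell$ for odd $\ell\in\{5,7,11,13,\dots\}$ (Bruin, Dahmen, Kraus --- not quoted in this paper), and $(4,5,7)$ is not killed by $c<N^{7/4}$ at all, so its small-radical regime $N<\exp(204.75)$ is not a finite check one can ``feed into'' Conjecture \ref{exp-abc} casually --- it requires the intermediate $(\epsilon,N_\epsilon)$ pairs of \cite[Theorem 1]{laishram2011}. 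Since the entire content of the proposition is the exactness of the exceptional list, these omissions are gaps in the proof rather than routine details; the skeleton is right, but the proof is not complete as it stands.
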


\section{Proof of Theorem \ref{main3}}

We start this section with a lemma which is analogous to Lemma \ref{2}.
	\begin{lemma}\label{4}
		Let $g(z)$ be a monic irreducible polynomial over a number field $K$ and let $f(z)=z^3 +\frac{1}{c} \in{K[z]}$. If none of $\Big\{g(f^n(0))\Big\}_{n\geq{1}}$ is a cube in $K$ then 
		$g(f^n(z))$ is irreducible for each $n\in{\N}$.
		\end{lemma}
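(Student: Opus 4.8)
The plan is to induct on $n$, using Capelli's Lemma (Lemma \ref{Capelli}) to climb from the $n$-th iterate to the $(n+1)$-st, much as in the quadratic case treated in Lemma \ref{2}. Write $G_n(z)=g(f^n(z))$, with the convention $f^0(z)=z$; since $g$ and $f$ are both monic, each $G_n$ is monic, and the base case $G_0=g$ is irreducible by hypothesis. Assuming $G_n$ is irreducible over $K$, I would note that $G_{n+1}=G_n\circ f$, so by Capelli's Lemma $G_{n+1}$ is irreducible over $K$ precisely when $f(z)-\delta$ is irreducible over $K(\delta)$ for every root $\delta$ of $G_n$. As $G_n$ is irreducible, its roots form one $K$-conjugacy class, and a $K$-isomorphism $K(\delta)\to K(\delta')$ sends $f(z)-\delta$ to $f(z)-\delta'$; hence it suffices to check a single root $\delta$.

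Fixing such a $\delta$ and setting $L=K(\delta)$, one has $f(z)-\delta=z^3-(\delta-\tfrac1c)$, and a cubic over a field is reducible exactly when it has a root in that field, so $f(z)-\delta$ is irreducible over $L$ if and only if $\delta-\tfrac1c$ is not a cube in $L$. Suppose for contradiction that $\delta-\tfrac1c=\gamma^3$ for some $\gamma\in L$. Then $N_{L/K}(\delta-\tfrac1c)=N_{L/K}(\gamma)^3$ is a cube in $K$. But $G_n$ is the minimal polynomial of $\delta$ over $K$ (monic, irreducible, with root $\delta$), so $N_{L/K}(\delta-\tfrac1c)=(-1)^{\deg G_n}\,G_n(\tfrac1c)$; and because $f(0)=\tfrac1c$ we get $G_n(\tfrac1c)=g\big(f^n(\tfrac1c)\big)=g\big(f^{n+1}(0)\big)$, while $\deg G_n=3^n\deg g\equiv\deg g\pmod 2$, so $(-1)^{\deg G_n}=\pm1$, itself a cube in $K$. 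Therefore $g\big(f^{n+1}(0)\big)$ is a cube in $K$, contradicting the hypothesis that no $g(f^m(0))$, $m\geq1$, is a cube. (That hypothesis also guarantees $g(f^{n+1}(0))\neq0$, hence $G_n(1/c)\neq0$ and $\delta\neq\tfrac1c$, so $f(z)-\delta$ is indeed a non-degenerate cubic.) It follows that $\delta-\tfrac1c$ is not a cube in $L$, so $f(z)-\delta$ is irreducible over $L$, and Capelli's Lemma gives that $G_{n+1}=g(f^{n+1}(z))$ is irreducible over $K$, completing the induction.

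I expect the only delicate point to be the norm identity $N_{K(\delta)/K}(\delta-\tfrac1c)=(-1)^{\deg G_n}g(f^{n+1}(0))$ in the contradiction step: one must carry the sign $(-1)^{\deg G_n}$ correctly and observe that it is harmless since $-1$ is a cube, and one must use the bookkeeping $G_n(1/c)=g(f^{n+1}(0))$ coming from $f(0)=1/c$. The remaining ingredients — monicity of $G_n$, the fact that a cubic is reducible iff it has a root, multiplicativity of the norm, and the reduction to a single conjugate — are all standard.
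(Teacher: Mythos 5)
Your proof is correct and follows essentially the same route as the paper's: induction via Capelli's Lemma, reducing irreducibility of the cubic $f(z)-\delta$ over $K(\delta)$ to $\delta-\tfrac1c$ not being a cube, and certifying that by computing $N_{K(\delta)/K}(\delta-\tfrac1c)=\pm\, g(f^{n+1}(0))$ and invoking the hypothesis. The extra care you take with the sign $(-1)^{\deg G_n}$ (harmless since $-1$ is a cube) and with the degenerate case $\delta=\tfrac1c$ is sound; the paper sidesteps the sign by computing $N(\tfrac1c-\delta)$ directly.
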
 
		\begin{proof}
			We prove it by induction on $n$. For $n=0,~~g(z)$ is given to be irreducible. 
			Assume now irreducibility of $g(f^{n-1}(z))$. Then by Capelli's Lemma \ref{Capelli},  $g(f^{n}(z))$ is irreducible over $K$ if and only if 
			$z^3+1/c-\gamma$ is irreducible over $K(\gamma)$, that is, $1/c-\gamma$ is not a cube in $K(\gamma)$, for every root $\gamma$ of $g(f^{n-1})(z)$. Note that
\begin{align*}
N_{K(\gamma)/K}\left(\frac{1}{c}-\gamma\right)&=\prod_{\alpha~~ \text{is root of}~~ g(f^{n-1}(z))}\left(\frac{1}{c}-\alpha\right)=g\left(f^{n-1}\left(\frac{1}{c}\right)\right)=g(f^{(n)}(0)).
\end{align*}
If $N_{K(\gamma)/K}(1/c-\gamma)$ is not a cube in $K$ then $1/c-\gamma$ is not a cube in $K(\gamma)$. Thus, if $g(f^{(n)}(0))$ is not a cube in $K$ then $z^3+1/c-\gamma$ is 
irreducible over $K(\gamma)$ so that $g(f^{n}(z))$ is irreducible over $K$ for each $n\in{\N}.$
\end{proof}

	\begin{lemma}
		Let $f(z)=z^d+\frac{1}{c}$ with $d$ an odd integer. For any prime $p$ dividing $d$, if $f^n(0)$ is not a $p$-th power in $\mathbb{Q}$ for $c>0$ then it is also not a $p$-th power for $c<0$ 
		in $\mathbb{Q}$. Additionally, If $d=3$ then similar result holds for $g(f^n(0))$ for any divisor $g(z)$ of $f(z)$.
	\end{lemma}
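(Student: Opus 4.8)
The plan is to exploit the symmetry $c\leftrightarrow -c$, writing $f_c(z)=z^d+\frac{1}{c}$ to display the dependence on the parameter. Since $d$ is odd, $f_{-c}(z)=z^d-\frac{1}{c}=-\big((-z)^d+\frac{1}{c}\big)=-f_c(-z)$. From this identity I would first prove, by induction on $n$, that
$$f_{-c}^n(0)=-f_c^n(0)\qquad\text{for all }n\ge 1.$$
The base case is $f_{-c}(0)=-\frac{1}{c}=-f_c(0)$, and if $f_{-c}^{n-1}(0)=-f_c^{n-1}(0)$, then $f_{-c}^n(0)=-f_c\big(-f_{-c}^{n-1}(0)\big)=-f_c\big(f_c^{n-1}(0)\big)=-f_c^n(0)$.

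Next, since $p\mid d$ and $d$ is odd, the prime $p$ is odd, so for every $a\in\mathbb{Q}$ the element $a$ is a $p$-th power in $\mathbb{Q}$ if and only if $-a$ is. Combined with the displayed identity, $f_c^n(0)$ is a $p$-th power in $\mathbb{Q}$ if and only if $f_{-c}^n(0)$ is. Hence, if $f_c^n(0)$ is not a $p$-th power for every $c>0$, then for any $c<0$, setting $c_0=-c>0$ we have $f_c^n(0)=-f_{c_0}^n(0)$, which cannot be a $p$-th power because $f_{c_0}^n(0)$ is not; this gives the first assertion.

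For $d=3$ I would additionally track the factorization. Because $f_{-c}(z)=-f_c(-z)$, the roots of $f_{-c}$ in $\overline{\mathbb{Q}}$ are precisely the negatives of the roots of $f_c$, and this correspondence commutes with the Galois action. Consequently, to each monic divisor $g$ of $f_c$ in $\mathbb{Q}[z]$ one associates the monic divisor $\hat g$ of $f_{-c}$ in $\mathbb{Q}[z]$ obtained by negating the roots; one checks that $\hat g(z)=(-1)^{\deg g}g(-z)$ and that $g\mapsto\hat g$ is an involution, hence a bijection, between the monic divisors of $f_c$ in $\mathbb{Q}[z]$ and those of $f_{-c}$. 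Using $f_{-c}^n(0)=-f_c^n(0)$ once more, $\hat g\big(f_{-c}^n(0)\big)=(-1)^{\deg g}g\big(f_c^n(0)\big)$, and since $3$ is odd this is a cube in $\mathbb{Q}$ exactly when $g\big(f_c^n(0)\big)$ is. The statement for $g(f^n(0))$ then follows as in the previous paragraph, applied for each $n$.

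The argument is entirely elementary; the only delicate point is the bookkeeping in the $d=3$ case — verifying that $g\mapsto\hat g$ sends $\mathbb{Q}[z]$ to $\mathbb{Q}[z]$, is degree-preserving and bijective, and that the sign $(-1)^{\deg g}$ is harmless because raising to an odd power commutes with negation.
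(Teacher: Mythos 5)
Your proposal is correct and follows essentially the same route as the paper: the paper's proof likewise establishes $f_{-c}^n(0)=-f_c^n(0)$ by induction and then observes $g_{-c}(f_{-c}^n(0))=\pm g_c(f_c^n(0))$ for the factors in the $d=3$ case. You have simply made explicit the correspondence $g\mapsto\hat g$ between divisors of $f_c$ and $f_{-c}$, which the paper leaves as an easy observation.
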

	\begin{proof} Let $f_{c}(z)=z^d+\frac{1}{c}$ with $d$ an odd integer. By induction, we have $f^n_{-c}(0)=-f^n_{c}(0)$ for all $n\geq{1}$. Therefore $f^n_{-c}(0)$ is a $p$-th power if and only if $f^n_{c}(0)$ is a $p$-th power in $\mathbb{Q}$. Suppose $d=3$, $f_{c}(z)$ is reducible and $g(z)|f(z)$. It is also easy to observe that $g_{-c}(f^n_{-c}(0))=\pm{g_c(f^n_{c}(0))}$. Hence the proposition follows. 
	\end{proof}

	Let $f(z)=z^3+\frac{1}{c}$. We know that $f(z)$ is reducible if and only if $c=m^3$ for some $m\in{\mathbb{Z}}.$ In that case $f(z)=(z+\frac{1}{m})(z^2-\frac{z}{m}+\frac{1}{m^2})$. Set
	\begin{equation} \label{g1g2}
		g_{1}(z)=z+\frac{1}{m} \quad  {\rm and} \quad g_{2}(z)=z^2-\frac{z}{m}+\frac{1}{m^2}.
	\end{equation}	
		 Since $g_{1}(f(z))=z^3+1/m^3+1/m=z^3+\frac{m^2+1}{m^3}$, $g_{1}(f(z))$ is reducible if and only if $m^2+1$ is a cube in $\mathbb{Z}.$ 
		 By Lemma \ref{a}, $m^2+1$ is not a cube unless $m=0$. Hence $g_{1}(f(z))$ is irreducible for all $m\ne{0}.$  For considering the irreducible factors of iterates of $f(z)$, we require the following 
		 lemma for which the proof is similar to that of \cite[Proposition 5.4]{jones2008}.
		 \begin{lemma}\label{rigid divisible}
		Let $f(z)=z^3+\frac{1}{c}$ be reducible with the irreducible factors $g_1(z)$ and $g_2(z)$ as in \eqref{g1g2}. Then $(g_i(f^n(0))),n\geq{1},$ is a rigid divisibility sequence for $i=1, 2$.
		 \end{lemma}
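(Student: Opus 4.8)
The plan is to realise $\big(g_i(f^n(0))\big)_n$ as (a norm of) an orbit sequence of the type treated in \cite[Proposition 5.4]{jones2008}, and to run that argument. Since $f(z)=z^3+\tfrac1c$ is reducible we have $c=m^3$ and $f=g_1g_2$ with $g_1,g_2$ as in \eqref{g1g2}. Working over $K:=\mathbb{Q}$ when $i=1$ and over $K:=\mathbb{Q}(\sqrt{-3})$ when $i=2$, we may write $g_i(z)=\prod_\beta (z-\beta)$, where each $\beta$ is a cube root of $-\tfrac1c$ and hence satisfies $f(\beta)=0$. Consequently the forward orbit of such a root is $\beta\mapsto 0\mapsto f(0)\mapsto\cdots$, so $f^m(\beta)=f^{m-1}(0)$ for $m\ge 1$; fixing a root $\beta_0$ of $g_i$ and putting $b_m:=f^m(\beta_0)-\beta_0$ we obtain
\[
 g_i\big(f^n(0)\big)=\prod_\beta\big(f^n(0)-\beta\big)=N_{K/\mathbb{Q}}(b_{n+1}),\qquad n\ge 1 .
\]
(For $i=1$ the norm is trivial; also $0$, hence $\beta_0$, is a wandering point of $f$, so every $b_m$ is nonzero.) It therefore suffices to prove that $(b_m)_{m\ge 1}$ is a rigid divisibility sequence over $K$ and that this property is inherited by its norm to $\mathbb{Q}$.

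Step 1: $(b_m)_{m\ge1}$ is a rigid divisibility sequence over $K$. As $f^l(x)-f^l(y)$ is always divisible by $x-y$, a telescoping argument gives $b_d\mid b_{kd}$ for all $k\ge1$, so $(b_m)$ is a divisibility sequence. The extra input is that the orbit of $\beta_0$ meets the unique critical point $0$ of $f$ (since $f'(z)=3z^2$): hence for $d\ge 2$ one has $(f^d)'(\beta_0)=\prod_{j=0}^{d-1}f'\big(f^j(\beta_0)\big)=0$, the vanishing factor being $f'(f(\beta_0))=f'(0)=0$. Writing $f^d(x)-f^d(y)=(x-y)\,h_d(x,y)$ with $h_d(\beta_0,\beta_0)=(f^d)'(\beta_0)=0$ and reducing modulo $b_d$ yields $b_d^{2}\mid b_{kd}-b_d$, whence $\nu_{\mathfrak{p}}(b_{kd})=\nu_{\mathfrak{p}}(b_d)$ whenever $\nu_{\mathfrak{p}}(b_d)>0$; for $d=1$, $b_1=-\beta_0$ has $\nu_{\mathfrak{p}}(b_1)\le0$ for $\mathfrak{p}\nmid m$, so the statement is vacuous there. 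This is the first rigid divisibility axiom. For the second, $\nu_{\mathfrak{p}}(b_n)>0$ says $f^n(\beta_0)\equiv\beta_0\pmod{\mathfrak{p}}$, i.e.\ $\beta_0$ is periodic modulo $\mathfrak{p}$ with period dividing $n$; intersecting the periods forced by $n$ and by $j$ produces a period dividing $\gcd(n,j)$, hence $\nu_{\mathfrak{p}}(b_{\gcd(n,j)})>0$.

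Step 2: descent to $\mathbb{Q}$ and the bad primes. Let $p$ be a rational prime with $p\nmid 3m$ (so $p$ is unramified in $K$ and $p\nmid\operatorname{disc}f=-27/m^6$). If $p$ is inert in $K$ then $\nu_p\big(N_{K/\mathbb{Q}}(b_m)\big)=2\nu_{\mathfrak{p}}(b_m)$, and both axioms pass down at once. If $p$ splits as $\mathfrak{p}_1\mathfrak{p}_2$, observe that $\beta_0\bmod\mathfrak{p}_1$ and $\beta_0\bmod\mathfrak{p}_2$ are the two distinct roots of $\overline{g_i}$ over $\mathbb{F}_p$; but $\mathfrak{p}_j\mid b_m$ forces $\beta_0$ to be periodic modulo $\mathfrak{p}_j$, and then its reduced orbit is a single cycle having a unique immediate predecessor of $\overline{0}$, so equals only one of these two roots. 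Consequently at most one of $\mathfrak{p}_1,\mathfrak{p}_2$ ever divides $b_m$, so $\nu_p\big(N_{K/\mathbb{Q}}(b_m)\big)=\nu_{\mathfrak{p}_1}(b_m)$ (say) and rigidity again descends. For $p\mid m$ an easy induction gives $\nu_p\big(f^n(0)\big)=-3^{n}\nu_p(m)$, hence $\nu_p\big(g_i(f^n(0))\big)<0$ for every $n$, so both axioms hold vacuously. Finally $p=3$ (the ramified prime, at which $\overline f=(z+\overline{1/m})^3$ is inseparable) is handled by a direct computation: modulo $3$ the orbit of $0$ is $0\mapsto\overline{1/m}\mapsto\overline{-1/m}\mapsto 0$, and expanding $g_2$ about $f^n(0)\equiv -1/m$ shows $\nu_3\big(g_2(f^n(0))\big)\le1$ always, while $\nu_3\big(g_1(f^n(0))\big)$ carries the remaining $3$-adic valuation of the orbit $(f^n(0))$; in either case the axioms are checked by hand.

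The step I expect to be the main obstacle is Step 2 — the descent from $K$ to $\mathbb{Q}$, and within it the prime $p=3$. Over $K$ the reasoning is a faithful copy of Jones's quadratic argument; to transfer it to $\mathbb{Q}$ one must know that the two primes above a split $p$ cannot simultaneously occur in the sequence (precisely the statement that the reduced critical orbit is a single cycle with a unique preimage of $\overline{0}$ in it), and one must separately control the $3$-adic valuations, where the clean separability/critical-point picture of Step 1 degenerates. Once these are settled, the reindexing $b_{n+1}\leftrightarrow g_i(f^n(0))$ gives the lemma.
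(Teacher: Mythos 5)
The paper does not actually prove this lemma --- it only remarks that the proof is similar to \cite[Proposition 5.4]{jones2008} --- so there is no argument of the authors' to measure yours against. Your strategy is clearly the intended one, and Steps 1 and 2 are essentially sound: setting $b_m=f^m(\beta_0)-\beta_0$ for a root $\beta_0$ of $g_i$, exploiting $f(\beta_0)=0$ so that the orbit of $\beta_0$ passes through the critical point $0$ and hence $(f^d)'(\beta_0)=0$ for $d\ge 2$, and descending rigid divisibility of $(b_m)$ from $K$ to $\mathbb{Q}$ through the norm. Your observation that at a split prime at most one of $\mathfrak{p}_1,\mathfrak{p}_2$ can ever divide a term of $(b_m)$ (because a periodic preimage of $\tilde 0$ must be the unique predecessor of $\tilde 0$ on its cycle) is exactly what makes the descent work; and at the ramified prime $3$ no separate computation is really needed, since $\nu_3(N_{K/\mathbb{Q}}(x))=\nu_{\mathfrak{p}_3}(x)$ on $K^\times$, so rigidity descends verbatim from $\mathfrak{p}_3$.

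The genuine gap is your final sentence, ``the reindexing $b_{n+1}\leftrightarrow g_i(f^n(0))$ gives the lemma.'' Rigid divisibility is not invariant under an index shift, because its first axiom compares the terms at $n$ and at $kn$. What your Steps 1--2 establish is that $(N_{K/\mathbb{Q}}(b_m))_{m\ge 1}=(g_i(f^{m-1}(0)))_{m\ge 1}$ is a rigid divisibility sequence; the sequence in the statement, $a_n=g_i(f^n(0))=N_{K/\mathbb{Q}}(b_{n+1})$, is its shift by one, and that shifted sequence is \emph{not} rigidly divisible under the standard ($\nu_p>0$) definition. Concretely, for $m=1$, i.e.\ $f(z)=z^3+1$: $g_2(f^2(0))=g_2(2)=3$ while $g_2(f^4(0))=g_2(730)=532171\equiv 1\pmod 3$, so $\nu_3(a_2)=1$ but $\nu_3(a_4)=0$, violating the first axiom with $n=k=2$; the same happens for $g_1$, where $g_1(f^2(0))=3$ and $g_1(f^4(0))=731$. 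This is precisely where your ``checked by hand'' claim at $p=3$ breaks down: the set of $n$ with $3\mid g_2(f^n(0))$ is $\{n\equiv 2\pmod 3\}$, which is not closed under multiplication by integers, whereas for the unshifted $(b_m)$ the corresponding set is the set of multiples of the period and all is well. The statement the paper actually uses is the shifted one (its $w_n$ and $x_n$ are the numerators of $g_i(f^{n-1}(0))$), and that is what you have genuinely proved; so the defect lies as much in the lemma's indexing as in your write-up, but as literally stated the lemma is false and your last step cannot be repaired without re-indexing the sequence.
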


	\subsection{Proof of Theorem \ref{main3}:} 
 Let $f(z)=z^3+\frac{1}{c}$ be reducible. Then $c=m^3$ for some $0\neq m\in{\mathbb{Z}}$ and write $f(z)=g_{1}(z)g_{2}(z)$, where $g_{1}(z),g_{2}(z)$ are given by \eqref{g1g2}. 
Let $w_{n}(m)$ be the numerator of $g_{1}(f^{n-1}(0))$, so that  $ w_{2}(m)$ is the numerator of $g_{1}(f(0))$. 
We observe that $w_{2}(m)=m^2+1$ is not a cube in $\mathbb{Z}$ by Lemma \ref{a}. By Lemma \ref{rigid divisible}, it follows that 
$({w_{n}(m)})$ is a rigid divisibility sequence. Hence $w_{2n}(m)$ is not a cube in $\mathbb{Z}$ for each $n\geq{1}$.
		
		 For a given  $k\in{\mathbb{N}}$ and $m\not\equiv 0\pmod{k}$, the sequence $(g_{1}(f^n(0))\pmod{k}$ eventually becomes a repeating cycle and we search for values of $k$ and congruence classes of $m$ modulo $k$ such that $(g_{1}(f^n(0)) \textrm{mod}\ k)$ is not a cube for each $n\in{\N}$. Since we have shown above that $w_{2j}$ is not a cube in $\mathbb{Z}$ for each $j\geq{1}$, that is, $g_{1}(f^{2j-1}(0)$ is not a cube in $\Q$, it is enough to check that $(g_{1}(f^{2j}(0)) \textrm{mod}\ k)$ is not a cube for $j\geq{1}$. In fact, with the help of SAGE we verify  for each $j\in{\N}$ that $(g_{1}(f^{2j}(0))$ is not a cube for $m$ belonging to the congruences classes  modulo $k\in \{7, 13, 19, 31, 37, 43\}$ that are listed in Table 1.
			\begin{table}[H]
			\begin{tabular}{|l|c|}\hline 
			$k$ & $m \pmod{k}$\\ \hline 
			$7$ & $m\equiv \pm1, \pm3$\\ \hline 
			$13$ & $m\equiv \pm1,\pm2,\pm3,\pm6$ \\ \hline 
			$19$ & 	$m\equiv \pm2, \pm4$  \\ \hline 
			$31$ &  $m\equiv \pm1,\pm3,\pm4,\pm6,\pm8,\pm9,\pm10,\pm11,\pm12$  \\ \hline 
			$37$ & 	$m\equiv \pm3,\pm9,\pm17$ \\ \hline 
			$43$ & $m\equiv \pm2,\pm5,\pm8,\pm10,\pm12,\pm13,\pm14,\pm15,\pm20$\\ \hline 
				\end{tabular}
			\caption{\label{tab:Table 1}}
					\end{table}
			One can verify that the congruence classes in Table 1 cover all integers belonging to the interval $[1,10^4]$ except for 267 of them. For each of the remaining integers, we could find a prime $p<150$ such that  $g_{1}(f^n(0)\pmod{p}$  is not a cube for every even integer $n$. For example, $p=73$ works for $m=4342$. Hence by  Lemma \ref{4},  $g_{1}(f^n(z)$ is irreducible for all $n\geq{1}$  when $1\leq{m}\leq{10^4}$.
			
			Let $x_{n}(m)$ be the numerator of  $g_{2}(f^{n-1}(0))$. In particular $x_{2}(m)$ is the numerator of $g_{2}(f(0)).$ It again follows from Lemma \ref{rigid divisible} that $(x_{n}(m))$ is a rigid divisibility 
			sequence. Since $g_{2}(f(0))=\frac{m^4-m^2+1}{m^6}$,~~we have $x_{2}(m)=m^4-m^2+1$. Now $x_2(m)$ is a cube if and only if the elliptic curve $y^2-y+1=x^3$ has integral point with $y=m^2$. It follows from the curve $243.a1$ in LMFDB\,\cite{collaboration2013functions} that $y^2-y+1=x^3$ has only integral points $(1,1), (7,19),(1,0),(7,-18)$. Since $m\ne{0}$, $m^4-m^2+1$ is a cube in $\mathbb{Q}$ for $m=\pm{1}$ only, that is, $x_{2}(m)$ is not a cube in $\mathbb{Z}$ for $|m|\geq{2}$. Let $m=1$. In this case $f(z)=z^3+1$ and $g_{2}(z)=z^2-z+1$, we verify that   $z^6+z^3+1=g_{2}(f(z))$ is irreducible over $\mathbb{Q}$. Further we checked 
			that $g_{2}(f^n(0))\pmod{7}$ is not a cube for each $n\geq{2}$. Hence by Lemma \ref{4}, $g_{2}(f^n(z))$ is irreducible for all $n\in{\N}.$ Similarly we verified that $g_2(f^n(z))$ is irreducible for each $n\in{\N}$ 
			when $m=-1$. Hence we take $|m|\geq 2$. Since $(x_{n}(m))$ is a rigid divisibility sequence and $x_2(m)$ is not a cube, for $n\geq 1$, $x_{2n}(m)$ is also not a cube in $\mathbb{Z}.$ Again with the help of SAGE we verify  for each $j\in{\N}$ that $(g_{1}(f^{2j}(0))$ is not a cube for $m$ belonging to the congruences classes  modulo $k\in \{7, 13, 19, 31, 37\}$ that are listed in Table 2.
			\begin{table}[H]
			\begin{tabular}{|l|c|}\hline
			$k$ & $m \pmod{k}$\\ \hline 
			$7$ & $m\equiv \pm1,\pm2,\pm3$ \\ \hline
			$13$ &  $m\equiv \pm1,\pm2,\pm3,\pm4,\pm6$\\ \hline
			$19$ & $m\equiv \pm3,\pm5$ \\ \hline
			$31$ & $m\equiv \pm1,\pm4,\pm7,\pm8,\pm9,\pm11,\pm14$ \\ \hline
			$37$ & $m\equiv \pm4,\pm7,\pm9,\pm12,\pm16,\pm17,\pm18$ \\ \hline	
			\end{tabular}
		\caption{\label{tab:Table 2}}
			\end{table}
	One can verify that the congruence classes in Table 2. cover all integers belonging to the interval $[1,10^4]$ except for $88$ of them. For each of the remaining integers, we could find a prime $p<150$ such that  $g_{1}(f^n(0)\pmod{p}$  is not a cube for every even integer $n$. For example, $p=67$ works for $m=2730$. Hence $(g_{2}(f^n(z))$ is irreducible for each $n\in{\N}$ by Lemma \ref{4}. This proves Theorem \ref{main3}.

\section{Proof of Theorems  \ref{main1} and \ref{main2}}

For the proof of Theorems  \ref{main1} and \ref{main2}, we need the following lemma. 

	\begin{lemma}\label{7} Let $g(z)$ be a monic irreducible polynomial over a number field $K$ and let $f(z)=z^d +\frac{1}{c} \in{K[z]}$. Then for $m\geq 1$, $g(f^m(z))$ is irreducible  if  $g(f^{m-1}(z))$ is irreducible 
	and 	${g(f^{m}(0))}$ is not a $p$-th power in $K$ for each prime $p$ dividing $d$.  Hence for $m\geq 1$, $g(f^m(z))$ is irreducible  if  ${g(f^{j}(0))}$ is not a $p$-th power in $K$ for each prime $p$ dividing $d$ and for each $1\leq j\leq m$. 
	\end{lemma}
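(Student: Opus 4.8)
The plan is to imitate the proof of Lemma \ref{4}, replacing ``cube'' by ``$p$-th power'' and replacing the special cubic argument by Capelli's Lemma together with the classical irreducibility criterion for binomials. It suffices to establish the single inductive step; the final assertion then follows by induction on $m$, the base case being that $g(f^0(z))=g(z)$ is irreducible by hypothesis, and one applies the step successively for $j=1,\dots,m$. So assume $g(f^{m-1}(z))$ is irreducible over $K$ and write $g(f^m(z))=\big(g\circ f^{m-1}\big)(f(z))$. Since $g\circ f^{m-1}$ is irreducible, Capelli's Lemma (Lemma \ref{Capelli}), applied with outer polynomial $g\circ f^{m-1}$ and inner polynomial $f$, shows that $g(f^m(z))$ is irreducible over $K$ if and only if $f(z)-\gamma=z^d-\big(\gamma-\tfrac1c\big)$ is irreducible over $L:=K(\gamma)$ for every root $\gamma$ of $g(f^{m-1}(z))$. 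Thus the problem reduces to proving irreducibility of the binomial $z^d-a$ over $L$, where $a=\gamma-\tfrac1c$.

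For this I would invoke the standard criterion (Capelli--Vahlen; see e.g. Lang, \emph{Algebra}, Ch.~VI): for $a\in L^\times$, $z^d-a$ is reducible over $L$ only if $a\in L^p$ for some prime $p\mid d$, or $4\mid d$ and $a\in-4L^4$. The link with the hypothesis is the norm identity used already in Lemma \ref{4}: since $g(f^{m-1}(z))$ is monic and irreducible it is the minimal polynomial of $\gamma$, whence
\begin{align*}
N_{L/K}\!\Big(\tfrac1c-\gamma\Big)=\prod_{\alpha}\Big(\tfrac1c-\alpha\Big)=g\!\Big(f^{m-1}\!\big(\tfrac1c\big)\Big)=g\big(f^m(0)\big),
\end{align*}
the product running over the roots $\alpha$ of $g(f^{m-1}(z))$; in particular the hypothesis forces $g(f^m(0))\neq0$, so $a\neq0$. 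Writing $D=[L:K]=d^{m-1}\deg g$, so that $N_{L/K}(a)=(-1)^D g(f^m(0))$, I would split into cases. If $a=b^p$ with $p$ an odd prime dividing $d$, taking norms gives $g(f^m(0))=(-1)^D N_{L/K}(b)^p=\big((-1)^D N_{L/K}(b)\big)^p$ (using that $p$ is odd), a $p$-th power in $K$ -- contradiction. If $4\mid d$ and $a=-4\delta^4$, taking norms gives $g(f^m(0))=4^D N_{L/K}(\delta)^4=\big(2^D N_{L/K}(\delta)^2\big)^2$, a square in $K$, which contradicts the hypothesis for $p=2$ (note $2\mid d$). Finally, if $a=b^2$ with $2\mid d$, taking norms gives $g(f^m(0))=(-1)^D N_{L/K}(b)^2$, which is a square in $K$ as soon as $D$ is even, and $D=d^{m-1}\deg g$ is even whenever $d$ is even and $m\geq2$. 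In each case we contradict the hypothesis, which completes the inductive step.

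The step needing the most care is the bookkeeping of the unit $(-1)^{[K(\gamma):K]}$ produced by the norm, together with the exceptional $4\mid d$ clause of the binomial criterion. For odd primes $p\mid d$ that unit is itself a $p$-th power and disappears, and the $-4L^4$ alternative always lands among the squares, so the only genuinely delicate configuration is $p=2$ with $[K(\gamma):K]$ odd, which forces $d$ even, $\deg g$ odd and $m=1$ -- precisely the base case, where the hypothesis should be read with the $(-1)^{\deg g}$-twist already present in Lemma \ref{2}. In every place where Lemma \ref{7} is actually applied in Sections 3 and 4 one is either iterating from an iterate already known to be irreducible, so $m\geq2$, or $d$ is odd; in either situation this case does not arise and the argument above goes through directly.
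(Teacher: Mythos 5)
Your proposal follows essentially the same route as the paper's proof: Capelli's Lemma reduces the step to irreducibility of the binomial $z^d-(\gamma-\tfrac1c)$ over $K(\gamma)$, which is handled by the standard criterion (the paper cites Lang, Theorem 9.1) together with the norm identity $N_{K(\gamma)/K}(\gamma-\tfrac1c)=(-1)^t\,g(f^m(0))$. Your bookkeeping of the sign $(-1)^t$ for $p=2$ and of the $-4L^4$ clause is in fact more careful than the paper's (which silently absorbs the sign), and your observation that the only delicate case is $m=1$ with $d$ even and $\deg g$ odd --- which never occurs in the applications, where $g=f$ --- is a correct and worthwhile refinement.
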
 
	\begin{proof}
	Assume $g(f^{m-1}(z))$ be irreducible. By Lemma \ref{Capelli} we have $g(f^{m}(z))$ is irreducible if and only if for every root $\beta$ of $g(f^{m-1}(z))$, $f(z)-\beta=z^d+1/c-\beta$ is irreducible over $K({\beta)}$. By \cite[Theorem $9.1$]{lang2002}, $f(z)-\beta$ is irreducible if for every prime $p$ dividing $d$ we have $\beta-1/c\notin{K(\beta)}^p$ and if $4|d$ then $\beta-1/c\notin{-4K(\beta)}^4$. We know that
			$\beta-\frac{1}{c}\notin{K(\beta)}^p$ for a prime $p$ if  $N_{K(\beta)/K}(\beta-\frac{1}{c})\notin{K}^p$. Note that
			\begin{align*}
				N_{K(\beta)/K}\left(\beta-\frac{1}{c}\right)&=\prod_{\text{roots}~\alpha \ {\rm of} \  g(f^{m-1}(z))} \left(\alpha-\frac{1}{c}\right)
				=(-1)^tg(\left(f^{m-1}\left(\frac{1}{c}\right)\right)=(-1)^tg(f^{(m)}(0))
			\end{align*}
			where $t=\deg\,(g(f^{m-1}(z)))$. Hence if $g(f^{(m)}(0))$ is not a $p$-th power in $K$ then $\beta-1/c\notin{K(\beta)}^p$. Now let $4|d.$ Since $g(f^{(m)}(0))$ is not a square in $K$ and $4|t$, we have 
			from  $N_{K(\beta)/K}(1/c-\beta)=g(f^{(m)}(0))$ that  $1/c-\beta$ is not a square and hence $1/c-\beta \notin{4K(\beta)}^4$. 	Hence we conclude that if $(g(f^{m}))(0)$ is not a $p$-th power in $K$  for each prime $p$ dividing $d$ and 
			$g(f^{m-1}(z))$ is irreducible, then $g(f^m(z))$ is irreducible. This proves the first assertion. 
			The latter follows inductively from the  first assertion.   
		\end{proof}

Apply Lemma \ref{7} to $K=\Q$ and $f(z)=g(z)=z^d+\frac{1}{c}$ whenever it is irreducible over $\Q$.  Let $a_{n}$ denote the numerator of $f^{n}(0)$. Then $a_{n}$ satisfies $a_1=1$ and for $n>1$, 
$$a_{n}=a_{n-1}^{d}+c^{d^{n-1}-1} \qquad  {\rm and} \qquad f^{n}(0)=\frac{a_{n}}{c^{d^{n-1}}}.$$ 
By induction,  $a_{n},a_{n+1}$ and $c$ are pairwise coprime for $n\in{\mathbb{N}}$. Note that for any prime $p$ dividing $d$, the denominator of $f^{n}(0)$ is always of the form $(c^e)^p$ for some positive integer $e$. So, for a prime $p$ that divides $d$, $f^{n}(0)$ is not a $p$-th power in $\mathbb{Q}$ unless $a_{n}$ is a $p$-th power in $\mathbb{Z}$. Let $n=2$. Then $a_1=1$ implying $a_2=1+c^{d-1}$. By Lemma \ref{a}, $a_2$ is a power only when $c=2, d=4$ in which case $a_2=3^2$.  By Lemma \ref{7},  $f^2(z)$ is irreducible for all 
$(d, c)\neq (4, 2)$. For $(d, c)=(4, 2)$, we check that $f^2(z)=(x^4+\frac{1}{2})^4+\frac{1}{2}$ is irreducible over $\mathbb{Q}$. Thus to prove Theorem 1 it is enough to consider $n\geq 3$. 
  
\subsection{{\bf Proof of Theorem \ref{main1}:} } Let $f(z)=z^d+b$. As in the statement of Proposition \ref{3}, 
let $S(d, m)$ be the set of $b\in{\mathbb{Q}}$ for which $f^m(z)$ is irreducible but $f^{m+1}(z)$ is not and set $S(d)=\cup_{m=1}^{\infty}S(d, m)$. By  
Proposition \ref{3}, $S(d)=\emptyset$ for all $d$ not divisible by $2$ or $3$. In other words, for $d$ not divisible by $2$ or $3$, if $f(z)=z^d+\frac{1}{c}$ is irreducible then all its iterates are also irreducible.  Hence for the proof of Theorem \ref{main1}, we may suppose that $2|d$ or $3|d$. 
	
Let $d=2$. We apply Proposition \ref{1} to $K=\mathbb{Q}$, $\phi(z)=f(z)=z^2+\dfrac{1}{c}$ and $\nu=\nu_2,$ the $2$-adic valuation on $\mathbb{Q}$. Here the 
corresponding residue field is $\mathbb{F}_2$. $f$ has good reduction when $c$ is odd in which case $\tilde{f}(z)=z^2+1$. Let $c$ be odd. Then $\tilde{f}(0)=1, \tilde{f}(1)=0$ so that 
the period $i=2$ by Proposition \ref{1}. That is $f^n(z)$ has at most $\nu_2(f^2(0))$ irreducible factors. Now $f(0)=\frac{1}{c}, f^2(0)=\frac{1+c}{c^2}$ so that $\nu_{2}(f^2(0))=\nu_{2}(1+c)$. When $c\equiv 1\pmod{4}$, 
we have $\nu_{2}(1+c)=1$ so that all the iterates $f^n(z)$ are irreducible over $\mathbb{Q}$. This proves $(a)$. 

Now suppose $d\geq 3$. As stated after the proof of Lemma \ref{7}, we need to show for $n\geq 3$ that for no prime $p$ dividing $d$ $a_{n}$ is a $p$-th power in $\mathbb{Z}$ . We consider various cases of (b) one by one. 

Let $d$ be an odd integer divisible by $3$. Then $d^{n-1}-1$ is even and $a_{n}=a_{n-1}^{d}+c^{d^{n-1}-1}=x_1^3+y_1^{2k}$ and if $p\ne3$ is a prime dividing $d$ then we also have $a_{n}=a_{n-1}^{d}+c^{d^{n-1}-1}=x_2^p+y_2^2$ for some integers $x_1,x_2, y_1,y_2,k$ with $k\geq 3$. For each $n\geq{3}$, $a_n$ is not a $p$-th power for any prime divisor of $d$ because if $a_n$ is a cube then $a_n+(-x_1)^3=y_1^{2k}$ and if $a_n$ is a $p$-th power for any odd prime $p\ne{3}$ dividing $d$ then $a_n+(-x_2)^p=y_2^2$ which are not possible by Lemma \ref{b}.
		
Let $d=2^r$ with $r\geq 2$. Then $f(z)=z^{2^r}+\frac{1}{c}$ and  $a_{n}=a_{n-1}^{2^r}+c^{(2^r)^{n-1}-1},\, n\geq{3}$. We need to show for each $n\geq 3$ that $a_{n}$ is not a square in $\mathbb{Z}$. 
If $a_n=y^2$ for some $y$, then taking $k=(2^r)^{n-1}-1>6$ odd, we get an equation $y^2+(-c)^k=(a_n^{2^{r-2}})^4$ which has no primitive solution by Lemma \ref{b}. 
			
Let $d=2^r\cdot 3^s$ with $r, s\geq 1$. Then $6|d$ and hence $a_{n}=a_{n-1}^d+c^{d^{n-1}-1}=x^6+y^k$ for some integers $x, y, k$ with odd $k\geq 5$. Considering equations 
$a_n+(-y)^k=x^6$ if $a_n$ is a square and $a_n+(-y)^k=x^6$ if $a_n$ is a cube, we get a contradiction by Lemma \ref{b}.  Hence $a_n$ is neither a square nor a cube for $n\geq 3$. 

Let $d=2^r\cdot5^s\cdot7^t$ with $r\geq 1,s,t\geq{0}$ and $d\equiv1\pmod3$. We assume that either $s>0$ or $t>0$ since $d=2^r$ is already considered. If $a_n=z^p$ for a prime $p|d$, we have 
$$a_n=a_{n-1}^d+c^{d^{n-1}-1}=\begin{cases}
x_1^{10}+y^3\implies z^2+(-y)^3=x^{10}_1  &{\rm if} \ s>0 \ {\rm and} \ a_n=z^2\\
x_3^7+y^3\implies z^2+(-y)^3=x_3^7 & {\rm if} \ s=0 \ {\rm and} \ a_n=z^2\\
x_2^5+y^3\implies z^5+(-x_2)^5=y^3 & {\rm if} \ s>0 \ {\rm and} \ a_n=z^5\\
x_3^7+y^3\implies x^7_3+(-z)^7=x_3^7 & {\rm if} \ t>0 \ {\rm and} \ a_n=z^7.
\end{cases}$$ 
We get a contradiction by Lemma \ref{b}.    

 Let $d\equiv 4\pmod{12}$ with $d>4$. Then $3|(d-1)$ and $4|d$ and we have 
 $$a_n=a_{n-1}^d+c^{d^{n-1}-1}=x^4_1+y^k_1=x^p_2+y^3_2$$ 
 some integers $x_1, x_2, y_1, y_2, k, p$ with odd $k\geq 6$ and  for any odd prime $p|d$. 
 Considering equations $a_n+(-y_1)^k=x^4_1$ if $a_n$ is a square and $a_n+(-x)^p=y^3_2$ if $a_n$ is a $p-$th power for an odd prime $p|d$, we get a contradiction by Lemma \ref{b}.  
 Hence $a_n$ is not a $p-$the power for any prime divisor of $d$. This proves Theorem \ref{main1}.

\subsection{{\bf Proof of Theorem \ref{main2}:} } 
 We may assume, by Theorem \ref{main1}, that $d$ is an even integer which is not considered in Theorem \ref{main1}. As stated after the proof of Lemma \ref{7} we need aWe already noted , that it is enough to show that for $n\geq 3$, $a_{n}$ is not a $p$-th power in $\mathbb{Z}$ for any prime $p$ dividing $d$. 
Let $p|d$ and suppose that $a_n=z^p$ for some integer $z$.  Then 
$$z^p=a_{n}=a_{n-1}^d +c^{d^{n-1}-1}=x^p+y^k$$ 
for some integers $x, y, k$ with odd $k\geq 5$. 
 
 From now on, we assume explict abc Conjecture \ref{exp-abc} . By Proposition \ref{abc-Beal}, $z^p+(-x)^p=y^k$ has no primitive solutions when $p$ is odd. Hence we assume that $p=2$ and therefore 
we have $z^2=a^d_{n-1}+c^{d^{n-1}-1}$.  It suffices to show that this equation is not possible.  We have $c<z^{\frac{2}{d^{n-1}-1}}$ and $a_{n-1}<z^{\frac{2}{d}}$ so that 
$$N=N(z^2\cdot a^d_{n-1}\cdot c^{d^{n-1}-1})=N(z\cdot a_{n-1}\cdot c)<z^{1+\frac{2}{d} +\frac{2}{d^{(n-1)}-1}}$$ where 
$N(r)$ is the radical of $r$.  By Theorem \ref{abc-LS} applied to $a^d_{n-1}+c^{d^{n-1}-1}=z^2$, we obtain 
 $z^2<N^{\frac{7}{4}}$ implying  
\begin{align*}
(z^2)^{\frac{4}{7}}<z^{1+\frac{2}{d} +\frac{2}{d^{n-1}-1}} \implies 
\frac{8}{7}<1+\frac{2}{d}+\frac{2}{d^{n-1}-1} \implies \frac{1}{14}<\frac{1}{d}+\frac{1}{d^{2}-1}.
  \end{align*}
since $n\geq 3$. Clearly $d>14$ so that $d^2-1>2d$ and hence 
 $$ \frac{1}{14}<\frac{1}{d}+\frac{1}{2d}=\frac{3}{2d} \implies d<21.$$
By Theorem \ref{main1}, we need to consider $d=14$ where we have the equation $x^2=a_{n}=a^{14}_{n-1}+c^{14^{n-1}-1}$.   
For $n$ odd, considering the equation  $x^2+(-y)^3=(a^2_{n-1})^7$ where 
$y^3=c^{14^{n-1}-1}$, we get a contradiction by Lemma \ref{b}. Hence we assume that $n$ is even. In particular $n\geq 4$.   
Also  $c=\pm 1$ is not possible by Lemma \ref{a} since $a_{n-1}\neq 0, \pm 1$.  Thus $|c|\geq 2$ and $n\geq 4$. We apply Theorem \ref{abc-LS}  to 
$a^{14}_{n-1}+c^{14^{n-1}-1}=z^2$. If  $$N=N(z^2\cdot a^{14}_{n-1}\cdot c^{14^{n-1}-1})<\exp(204.75),$$  
 we obtain 
 $$2^{14^3-1}\leq |c|^{14^{n-1}-1}\leq N^{\frac{7}{4}}\leq \exp(204.75\cdot \frac{7}{4})$$
which is a contradiction. Hence $N\geq \exp(204.75)$. Taking $\epsilon=\frac{7}{12}$ in Theorem \ref{abc-LS} again, we obtain $z^2<N^{1+\frac{7}{12}}$ implying 
\begin{align*}
 (z^2)^{\frac{12}{19}}<N\leq z^{1+\frac{2}{14} +\frac{2}{14^{n-1}-1}} \implies \frac{24}{19}<1+\frac{1}{7}+\frac{2}{14^{3}-1}<1+\frac{1}{7}+\frac{2}{19}
 \end{align*}
 since $n\geq 4$. This is a contradiction again. This proves Theorem \ref{main2}. 

	\bibliographystyle{plain}

\begin{thebibliography}{99}
	\bibitem{baker2004} Baker, Alan, \textit{Experiments on the abc-conjecture}, Publ. Math. Debrecen, 65(3-4):253-260, 2004.
		\bibitem{benedetto}  {Benedetto, Robert and Ingram, Patrick and Jones, Rafe and Manes, Michelle and Silverman, Joseph and Tucker, Thomas}, \textit{Current trends and open problems in arithmetic dynamics}, {Bulletin of the American Mathematical Society}, 56(4):611-685, 2019.
		\bibitem{bennett2015}  {Bennett, Michael A and Chen, Imin and Dahmen, Sander R and Yazdani, Soroosh}, \textit{Generalized Fermat equations: a miscellany}, {International Journal of Number Theory}, 11(01):1-28, 2015.
		\bibitem{2021finite}  {Bridy, Andrew and Doyle, John and Ghioca, Dragos and Hsia, Liang-Chung and Tucker, Thomas}, \textit{Finite index theorems for iterated Galois groups of unicritical polynomials}, {Transactions of the American Mathematical Society}, 374(1):733-752, 2021.
		\bibitem{2019finite} {Bridy, Andrew and Tucker, Thomas J}, \textit{Finite index theorems for iterated Galois groups of cubic polynomials}, {Mathematische Annalen}, 373(1):37-72, 2019.
		\bibitem{collaboration2013functions}  {Collaboration, LMFDB}, The L-Functions and Modular Forms Database (2013), \textit{Online Accessed}, 16, 2013.
		\bibitem{danielson2002}  {Danielson, Lynda and Fein, Burton}, \textit{On the irreducibility of the iterates of $x^n-b$}, {Proceedings of the American Mathematical Society}, 130(6):1589-1596, 2002.
		\bibitem{jones2020}  {DeMark, David and Hindes, Wade and Jones, Rafe and Misplon, Moses and Stoll, Michael and Stoneman, Michael}, \textit{Eventually stable quadratic polynomials over $\mathbb{Q}$}, {New York J. Math.}, 26:526-561, 2020.
		\bibitem{faber2011}  {Faber, Xander and Granville, Andrew}, \textit{Prime factors of dynamical sequences}, \textit{J. Reine Angew. Math.}, 661:189-214, 2011.
		\bibitem{gratton2013} {Gratton, Chad and Nguyen, Khoa and Tucker, Thomas J}, \textit{ABC implies primitive prime divisors in arithmetic dynamics}, {Bulletin of the London Mathematical Society}, 45(6):1194-1208, 2013.	
		\bibitem{hamblen}  {Hamblen, Spencer and Jones, Rafe and Madhu, Kalyani}, \textit{The density of primes in orbits of $z^d+ c$}, {International Mathematics Research Notices}, (7):1924-1958, 2015.
        \bibitem{ingram2009}  {Ingram, Patrick and Silverman, Joseph H}, \textit{Primitive divisors in arithmetic dynamics}, {Mathematical Proceedings of the Cambridge Philosophical Society}, 146(2):289-302, 2009.
		\bibitem{jones2017}  {Jones, Rafe and Levy, Alon}, \textit{Eventually stable rational functions}, {International Journal of Number Theory}, 13(09):2299-2318, 2017.
		\bibitem{jones2008}  {Jones, Rafe}, \textit{The density of prime divisors in the arithmetic dynamics of quadratic polynomials}, {Journal of the London Mathematical Society}, 78(2):523-544, 2008.
		\bibitem{krieger2013}  {Krieger, Holly}, \textit{Primitive prime divisors in the critical orbit of $z^d + c$}, {International Mathematics Research Notices}, (23):5498-5525, 2013.	
		\bibitem{laishram2011} {Laishram, Shanta and Shorey, TN}, \emph{Baker's explicit abc-conjecture and applications}, Acta Arithmetica, 155(4): 419--429, 2012.
  	    \bibitem{lang2002}  {Lang, Serge},  \textit{Algebra},Revised third edition, volume 211 of  Graduate Texts in Mathematics, Springer-Verlag, New York, 2002.
  	    \bibitem{mihai} {Mih$\breve{a}$ilescu, Preda}, \textit{Primary Cyclotomic Units and a Proof of Catalan's Conjecture},  J. reine angew. Math. 572: 167-195, 2004.
  	   	\bibitem{rice2007}  {Rice, Brian}, \textit{Primitive prime divisors in polynomial arithmetic dynamics}, {Integers}, 7(1):A26, 2007.
		\bibitem{silverman2007}  {Silverman, Joseph H},  \textit{The arithmetic of dynamical systems}, volume 241 of  Graduate Texts in Mathematics. Springer, New York, 2007.
		
		
    	\end{thebibliography}
	
	\end{document}